\newtheorem{theorem}{Theorem}
\newtheorem{prop}[theorem]{Proposition}
\newtheorem{coro}[theorem]{Corollary}
\newtheorem{definition}[theorem]{Definition}
\newtheorem{lemma}[theorem]{Lemma}
\newcommand \R {{\mathbb{R}}}
\newcommand \B {{\mathbb{B}}}
\renewcommand \S {{S^{n-1}}}
\newcommand \Lowner {L\"owner\xspace}
\newcommand \M {{\operatorname{M}_n(\R)}}
\newcommand \GL {{\operatorname{GL}_n(\R)}}
\newcommand \SL {{\operatorname{SL}_n(\R)}}
\newcommand \sym {{\operatorname{Sym}}}
\newcommand \symp {{\operatorname{Sym_+}}}
\newcommand \vol {{\operatorname{vol}}}
\newcommand \tr {{\operatorname{tr}}}
\newcommand \I {{\operatorname{Id}}}
\newcommand \co {{\operatorname{co}}}
\newcommand \St {{\S \cap \partial K}}
\title[On explicit representations of isotropic measures]{On explicit representations of isotropic measures in John and L\"owner positions}
\author{F. M. Ba\^eta}
\author{J. Haddad}
\address{Departamento de Matem\'atica, ICEx,  Universidade Federal de Minas Gerais, 30.12370, Belo Horizonte, Brasil.}
\email{jhaddad@mat.ufmg.br, fernandah@ufmg.br}
\subjclass{52A40, 52A05}
\date{}
\begin{document}
\begin{abstract}
	Given a convex body $K \subseteq \R^n$ in \Lowner position we study the problem of constructing a non-negative centered isotropic measure supported in the contact points, whose existence is guaranteed by John's Theorem.
	The method we propose requires the minimization of a convex function defined in an $\frac {n(n+3)}2$ dimensional vector space.
	We find a geometric interpretation of the minimizer as $\left. \frac{\partial}{\partial r}(A_r, v_r)\right|_{r=1}$, where $A_r K + v_r$ is a one-parameter family of positions of $K$ that are in some sense related to the {\it maximal intersection position of radius $r$} defined recently by Artstein-Avidan and Katzin.
\end{abstract}
\maketitle

\section{Introduction}
Let $K \subset \R^n$ be a convex body (a compact convex set with non-empty interior).
In 1948 Fritz John \cite{john1948extremum} studied the problem of determining the ellipsoid $\mathcal E_J \subseteq K$ of maximal volume inside $K$ (known today as {\it Jonh's Ellipsoid}) and showed a set of necessary conditions for $\mathcal E_J$ to be the unit Euclidean ball $\B$ (Theorem \ref{thm_john} below).
A position of a convex body $K$ is a set of the form $A(K)$ where $A$ is an invertible affine transformation (a linear function composed with a translation).
We say that $K$ is in John position if $\mathcal E_J = \B$.
Since all ellipsoids are of the form $A(\B)$, there exists always a matrix $A_K \in \GL$ and vector $v_K \in \R^n$ for which $A_K K+v_K$ is in John position. 
A construction that is dual to John's ellipsoid is the {\it \Lowner ellipsoid} $\mathcal E_L \supseteq K$ which is the unique ellipsoid of minimal volume containing $K$.
The set $K$ is in \Lowner position if $\mathcal E_L = \B$.
John's Theorem can be stated as follows.
\begin{theorem}
\cite[Application 4, pag. 199 - 200]{john1948extremum}
	\label{thm_john}
	Assume $K$ is in John (resp. \Lowner) position, then there exists a finite set of points $\{\xi_1, \ldots, \xi_m\} \in S^{n-1} \cap \partial K$, positive numbers $\{c_1, \ldots, c_m\}$ and $\lambda \neq 0$, for which 
	\begin{equation}
		\label{eq_contactpoints}
		\sum_i c_i \xi_i \otimes \xi_i = \lambda\ \I \ \text{ and } \ \ \sum_i c_i \xi_i = 0.
	\end{equation}
Here $v \otimes w$ is the rank-one matrix $(v \otimes w)_{i,j} = v_i w_j$.
\end{theorem}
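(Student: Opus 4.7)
I would focus on the \Lowner case; the John case then follows by a dual perturbation (growing $\B$ inside $K$ instead of shrinking it around $K$). Observe first that \eqref{eq_contactpoints} with $\lambda \neq 0$ is equivalent, after taking traces and rescaling the $c_i$, to the convex-hull membership
\[
\Bigl(\tfrac{1}{n}\I,\, 0\Bigr) \;\in\; \co\bigl\{(\xi \otimes \xi,\, \xi) : \xi \in \St\bigr\} \;\subset\; \sym \times \R^n.
\]
Since $\St$ is compact, so is this convex hull. If the membership failed, the Hahn--Banach theorem would produce a pair $(H,u) \in \sym \times \R^n$ and an $\alpha$ with $\xi^\top H \xi + u^\top \xi \ge \alpha > \tr(H)/n$ for every $\xi \in \St$. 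Replacing $H$ by $H - (\tr(H)/n)\I$, I can assume $\tr(H) = 0$ and $\xi^\top H \xi + u^\top \xi \ge \varepsilon > 0$ uniformly on $\St$.

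The next step would be to turn $(H, u)$ into a perturbation of $\B$ violating the minimality of the \Lowner ellipsoid. For $\delta \in (0, \varepsilon)$ and small $t > 0$, consider
\[
Q_{t} = \I + t H - t\delta\, \I, \qquad v_t = \tfrac{t}{2}\, u, \qquad E_t = \{x \in \R^n : (x - v_t)^\top Q_t^{-1}(x - v_t) \le 1\}.
\]
Since $\tr(H) = 0$, a Taylor expansion gives $\det Q_t = 1 - t n \delta + O(t^2)$, so $\vol(E_t) = \vol(\B)\sqrt{\det Q_t} < \vol(\B)$ for small $t$. Using $Q_t^{-1} = \I - tH + t\delta\I + O(t^2)$ and $v_t = O(t)$, for $x \in K \subseteq \B$ the defining function of $E_t$ expands as
\[
(x - v_t)^\top Q_t^{-1}(x - v_t) \;=\; |x|^2 + t\bigl(\delta|x|^2 - x^\top H x - u^\top x\bigr) + O(t^2).
\]

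The remaining step, and the main technical obstacle, is to promote this first-order estimate into the honest inclusion $K \subseteq E_t$ for a fixed positive $t$; the separation inequality controls the linear term only on $\St$, whereas I need it on all of $\partial K$. I would argue by compactness on $\partial K \subset \overline{\B}$, splitting it into a small neighborhood $U$ of $\St$ and its complement. On $\partial K \setminus U$ one has $|x|^2 \le 1 - \eta$ for some $\eta > 0$, so for $t$ small enough the right-hand side above is $< 1$; on $U$, continuity and the uniform bound $\xi^\top H\xi + u^\top\xi \ge \varepsilon > \delta$ on $\St$ make the linear coefficient uniformly negative, so again the expression stays $\le 1$ for $t$ small. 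Together these yield $\partial K \subseteq E_t$ and hence $K \subseteq E_t$ by convexity, contradicting the \Lowner minimality of $\B$. The convex-hull membership therefore holds, and Carath\'eodory's theorem in $\sym \times \R^n$ extracts finitely many contact points $\xi_i$ (at most $\tfrac{n(n+3)}{2} + 1$) with positive weights $c_i$ satisfying \eqref{eq_contactpoints}.
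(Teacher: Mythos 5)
The paper does not itself prove Theorem~\ref{thm_john}; it cites John's original article, and in the introduction explicitly describes the ``usual'' modern proof as showing that $(\tfrac 1n \I,0)$ cannot be separated by a linear functional from $\{(\xi\otimes\xi,\xi) : \xi\in\St\}$ (noting that John's own argument was different but also non-constructive). Your proposal is precisely that standard separation-plus-perturbation argument, and it is correct: strict separation in $\sym\times\R^n$ produces $(H,u)$ with $\tr H=0$ and $\xi^\top H\xi+u^\top\xi\ge\varepsilon>0$ on $\St$, the perturbed ellipsoid $E_t$ has smaller volume while a compactness split of $\partial K$ into a neighborhood of $\St$ (where $|x|^2\le 1$ and the linear coefficient is uniformly negative) and its complement (where $|x|^2\le 1-\eta$) yields $K\subseteq E_t$ for small $t>0$, contradicting minimality; Carath\'eodory then extracts the finite atomic measure. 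In the paper's own framework, the convex-hull membership you establish is exactly condition (3) of Theorem~\ref{thm_lowner_equivalences}, so you are in effect supplying the proof of the implication (1)$\Rightarrow$(3) that the paper delegates to its reference. One cosmetic remark: applying Carath\'eodory inside the affine hyperplane $\{\tr M=1\}$ of $\sym\times\R^n$ gives the sharper classical bound of $\tfrac{n(n+3)}{2}$ contact points; your $\tfrac{n(n+3)}{2}+1$ comes from ignoring this trace constraint, which is harmless but slightly suboptimal.
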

Since $\tr(\xi_i \otimes \xi_i) = |\xi_i|_2 = 1$, taking traces in the first equality of \eqref{eq_contactpoints} we obtain $\sum_i c_i = n \lambda$, determining the value of $\lambda$.

A measure $\mu$ on the sphere $S^{n-1}$ is said to be {\it isotropic} if
\[ \int_\S (\xi \otimes \xi) d\mu = \lambda\ \I,\]
for some $\lambda \neq 0$, and {\it centered} if
\[ \int_\S \xi d\mu = 0.\]
(Integration in $\R^n$ and $\M$ is understood to be coordinatewise).
Then one can see that equation \eqref{eq_contactpoints} can be expressed as the fact that the atomic measure $\mu_K = \sum_i c_i \delta_{\xi_i}$ is centered and isotropic.
Later Ball \cite{ball1992ellipsoids} proved that the existence of a non-negative centered isotropic measure $\mu_K$ in the set of contact points, guarantees that $K$ is in John position if $\B \subseteq K$, or in \Lowner position if $K \subseteq \B$.

The literature around the John/\Lowner position and its relation to isotropic measures, is vast.
The relation between extremal position and isotropic measures was studied extensively in \cite{giannopoulos1999isotropic,giannopoulos2000extremal,giannopoulos2001john}. Extensions to related minimization problems were studied in \cite{bastero2004positions, lutwak2005john, bastero2002john, gordon2004john, lasserre2015generalization}. Isotropic measures can also be used in combination with the Brascamp-Lieb inequality to find reverse isoperimetric inequalities, see \cite{ball1989volumes, ball1991volume, ball1992ellipsoids, lutwak2007volume}. 
A simple example of the relation between isotropic measures and extremal problems is the fact (see \cite{giannopoulos1999isotropic}) that a convex body $K$ is in position of minimal surface area measure (among positions of the same volume) if and only if its area measure is isotropic. We cite this example because in this case (as in many others) the isotropic measure is directly obtained from the convex body $K$. But the existence of the measure $\mu_K$ in Theorem \ref{thm_john} (and in particular, of $\{c_i, \xi_i\}$) is often shown in a non-constructive way, usually by proving first that it is impossible to separate $(\frac 1n \I,0) \in \M \times \R^n$ from the set 
\[\left\{(\xi \otimes \xi,\xi) \in \M \times \R^n / \xi \in S^{n-1} \cap \partial K\right\}\]
with linear functionals.
The same approach can be found in \cite[Section 2.1.3]{artstein2015asymptotic} and in \cite{gruber2005arithmetic, giannopoulos2000extremal}, the original idea of John \cite{john1948extremum} is different, but still non-constructive.
Recently in \cite{artstein2018isotropic}, Artstein and Katzin showed that $\mu_K$ can be constructed as a weak approximation of uniform measures on subsets of $\S$.
Moreover, they introduced a new one-parameter family of positions: A convex body $K$ is said to be in {\it maximal intersection position of radius $r$} if $r \B$ is the ellipsoid maximizing $\vol(r \B \cap K)$ among all ellipsoids of same volume as $r \B$.
It is also shown that every centrally symmetric convex body $K$ admits at least one of such positions $T_r K$ with $T_r \in \SL$, and in this case the uniform measure in $\S \cap r^{-1} T_r K$ is isotropic (modulo some technical assumptions).

For convenience we only formulate the theorem in the ``\Lowner version''.
\begin{theorem}[Theorem 1.6, \cite{artstein2018isotropic}]
	\label{thm_shiri}
	Let $K \subset \R^n$ be a centrally symmetric convex body. For every $r<1$, denote by $\nu_r$ the uniform probability measure on $\S \cap r^{-1} T_r K$, where $T_r K$ is in maximal intersection position of radius $r$.
	Then there exists a sequence $r_j \nearrow 1$ such that the sequence of measures $\nu_{r_j}$ weakly converges to an isotropic measure whose support is contained in $\partial K \cap \S$.
\end{theorem}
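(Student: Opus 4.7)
The plan is to exploit the first-order optimality conditions coming from $T_r K$ being a critical point of $T \mapsto \vol(r\B \cap TK)$ on $\SL$, and to convert them via the divergence theorem into an \emph{exact} isotropy statement for $\nu_r$ itself for every $r < 1$; weak compactness then reduces the theorem to a support condition. To this end I would perturb by $A_\epsilon = (\I + \epsilon S) T_r$ with $S \in \sym$ and $\tr S = 0$, so that $\det A_\epsilon = 1 + O(\epsilon^2)$. Applying Reynolds transport (only $\partial(T_r K)$ moves, while $r\B$ is held fixed) gives
\begin{equation*}
0 = \frac{d}{d\epsilon}\Big|_{\epsilon=0} \vol\bigl(r\B \cap (\I+\epsilon S)T_r K\bigr) = \int_{\partial(T_r K) \cap r\B} (Sx) \cdot n_{T_r K}(x) \, dA(x).
\end{equation*}
Since this vanishes for every trace-zero symmetric $S$, the matrix $\int_{\partial(T_r K) \cap r\B} n \otimes x \, dA$ is a scalar multiple of $\I$; rescaling by $r^{-1}$ and writing $\tilde L_r := r^{-1} T_r K$, this becomes $\int_{\partial \tilde L_r \cap \B} n \otimes x \, dA = \lambda_r \I$ for some $\lambda_r \in \R$.

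Next I apply the divergence theorem to $P_r := \B \cap \tilde L_r$. From $\operatorname{div}(x_j e_i) = \delta_{ij}$ one gets $\int_{\partial P_r} n_{P_r} \otimes x \, dA = \vol(P_r)\, \I$. Splitting $\partial P_r = (\S \cap \tilde L_r) \cup (\partial \tilde L_r \cap \B)$, using that on $\S \cap \tilde L_r$ the outer normal to $P_r$ is $x$ itself, and subtracting the previous identity yields
\begin{equation*}
\int_{\S \cap \tilde L_r} x \otimes x \, dA = \bigl(\vol(P_r) - \lambda_r\bigr) \I.
\end{equation*}
Dividing by $|\S \cap \tilde L_r|$ shows that $\nu_r$ is isotropic, and taking traces together with $|x|=1$ on $\S$ forces the constant to be exactly $1/n$. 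Central symmetry of $K$ makes $T_r K$ centrally symmetric and hence $\nu_r$ centered.

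Finally, by weak compactness of probability measures on the compact set $\S$, any sequence $\nu_{r_j}$ with $r_j \nearrow 1$ admits a weakly convergent subsequence, and the limit $\nu^*$ inherits isotropy (with the same constant $1/n$) and centering by testing against the bounded continuous functions $x_i x_j$ and $x_i$. The remaining and main obstacle is the support condition $\operatorname{supp}(\nu^*) \subseteq \St$: one must show that, after normalizing $K$ to its limiting maximal-intersection position, the bodies $\tilde L_r$ converge in Hausdorff distance (along the same subsequence if necessary) to $K$, so that the closed sets $\S \cap \tilde L_r$ contract onto $\St$ and no mass of $\nu^*$ is placed at points of $\S$ strictly inside $K$. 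A secondary issue is justifying the Reynolds step when $\partial K$ is not smooth; this can be handled by approximating $K$ with $C^2$ strictly convex bodies, or by arguing directly via the coarea formula and distributional derivatives of $\mathbf{1}_{r\B}$.
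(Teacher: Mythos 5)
The paper under review does not prove Theorem \ref{thm_shiri}: it is cited from \cite{artstein2018isotropic}, and the paper explicitly defers the rigorous limiting argument to \cite[Theorem 2.6]{artstein2018isotropic}. What the paper does offer is a heuristic derivation of the isotropy condition for $\nu_r$ by differentiating $L_r$ with $f=1_{[-1,\infty)}$, $g=1_{(-\infty,0]}$ and interpreting $(f')_r$ distributionally as a Dirac delta on $r\S$. Your Reynolds-transport computation of $\frac{d}{d\epsilon}\big|_{\epsilon=0}\vol(r\B\cap(\I+\epsilon S)T_rK)$ is exactly the rigorous version of that Lagrange-multiplier step, and your use of the divergence theorem on $P_r=\B\cap\tilde L_r$ to trade the interior boundary integral $\int_{\partial\tilde L_r\cap\B}n\otimes x\,dA$ for $\int_{\S\cap\tilde L_r}x\otimes x\,dA$ is a clean and correct way to obtain the isotropy of $\nu_r$ for every fixed $r$. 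In this sense you are following the same route as the original paper, just phrased more geometrically.

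Two remarks on where the argument is not yet airtight. First, the variational step only gives you that $\langle S,\int n\otimes x\,dA\rangle=0$ for all \emph{symmetric} trace-zero $S$, so a priori it pins down only the symmetric part of $\int_{\partial\tilde L_r\cap\B}n\otimes x\,dA$. You should add one line: since $\operatorname{div}(x_je_i-x_ie_j)=0$, the integral of $n\otimes x - x\otimes n$ over the full boundary $\partial P_r$ vanishes, and the spherical piece contributes nothing because there $n$ is parallel to $x$; hence $\int_{\partial\tilde L_r\cap\B}n\otimes x\,dA$ is in fact symmetric, and the variational condition then forces it to equal $c\I$. Without this observation the claim ``is a scalar multiple of $\I$'' is not justified. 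Second, and more seriously, the support condition is not merely ``the remaining and main obstacle''---it is the bulk of the theorem, and your sketch leaves the crucial ingredients unproved: one must show that $\vol(T_rK\cap r\B)\to\vol(K)$ as $r\to1^-$ (under the implicit Löwner normalization), that this forces $T_r\to\I$ along a subsequence, and that the resulting Hausdorff convergence $\tilde L_r\to K$ together with $|\S\cap\tilde L_r|\not\to0$ suffices to confine $\operatorname{supp}\nu^*$ to $\St$ and to exclude the degenerate possibility that $\nu^*$ is not centered-isotropic because the normalizing constants collapse. These are precisely the points handled in \cite[Theorem 2.6]{artstein2018isotropic}, and as written your proposal asserts rather than proves them. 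The regularity caveat you raise (Reynolds on a non-smooth $\partial K$) is also real, but as you note it can be handled by $C^2$ approximation or by the coarea/distributional argument, and is secondary.
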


The isotropic measure is thus constructed (in the symmetric case), but one can argue that in practice there is a large body of computations to make. First of all, one has to solve a one-parameter family of minimization problems (find the matrix $T_r$ for $r$ close to $1$), and then take the limit of (some subsequence of) all these measures $\nu_r$.

The purpose of this paper is to present a simple finite dimensional minimization problem whose solution (when it exists) can be used to construct a non-negative centered isotropic measure as above.

We will assume always that $K$ is in \Lowner position.
Denote by $\sym, \symp \subseteq \M$ the sets of symmetric matrices and symmetric positive-definite matrices, respectively.
Also define $\sym_a = \{ A \in \sym / \tr(A) = a\}$ for $a \in \R$, and $\SL \subseteq \M$ the set of matrices of determinant one.

We shall prove the following:
\begin{theorem}
	\label{thm_isotropic_general}
	Let $K$ be a convex body in \Lowner position.
	Choose any finite positive and non-zero measure $\nu$ in $\St$, and any $C^1$ function $F:\R \to \R$ that is non-negative, non-decreasing, convex, strictly convex in $[0,\infty)$, and assume $F'(0) > 0$.
	Consider the convex functional $I_\nu:\sym \times \R^n \to \R$ defined by
	\[I_\nu (M,w) = \int_\S F(\langle \xi, M \xi + w \rangle) d\nu(\xi).\]
	If the restriction of $I_\nu$ to $\sym_0 \times \R^n$ has a unique global minimum $(M_0, w_0)$, then the measure 
	\[F'(\langle \xi, M_0 \xi + w_0 \rangle) d\nu(\xi)\]
	is non-negative, non-zero, centered and isotropic.
\end{theorem}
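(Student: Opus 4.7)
The plan is to derive the desired conclusions directly from the first-order optimality conditions satisfied by the minimizer $(M_0,w_0)$, and then to settle non-triviality of the resulting measure separately. Write $c(\xi) := \langle \xi, M_0\xi + w_0\rangle$ and let $\mu := F'(c)\, d\nu$ be the candidate measure. Because $\St$ is compact, $\nu$ is finite, and $F'$ is continuous, differentiation under the integral in $I_\nu$ is legitimate throughout.

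First I would extract the Euler--Lagrange equations. For any admissible direction $(H,u) \in \sym_0 \times \R^n$, the vanishing of $\frac{d}{dt} I_\nu(M_0+tH, w_0+tu)|_{t=0}$ reads
\[\int_\S F'(c(\xi))\bigl(\tr(H(\xi\otimes\xi)) + \langle u,\xi\rangle\bigr)\,d\nu(\xi) = 0.\]
Setting $H = 0$ and varying $u \in \R^n$ yields $\int F'(c)\,\xi\, d\nu = 0$, which is the centered condition. Setting $u = 0$ yields $\tr(H N) = 0$ for every $H \in \sym_0$, where $N := \int F'(c)(\xi\otimes\xi)\, d\nu$ is symmetric. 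Since $\sym_0$ is the Hilbert--Schmidt orthogonal complement in $\sym$ of $\R \cdot \I$, this forces $N = \lambda\I$ for some $\lambda \in \R$, which gives the isotropic condition modulo showing $\lambda \neq 0$. Non-negativity of $\mu$ is immediate since $F$ is non-decreasing and so $F' \geq 0$.

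The main obstacle is non-triviality, i.e. $\lambda > 0$. Because $F$ is convex and $C^1$, $F'$ is continuous and non-decreasing, and $F'(0) > 0$. Thus either $F' > 0$ throughout $\R$, in which case $\int F'(c)\,d\nu > 0$ trivially, or the zero set $\{F' = 0\}$ is a half-line $(-\infty, a]$ with $a < 0$, on which $F$ equals the constant $F(a)$. Suppose in the latter case, for contradiction, that $F'(c) = 0$ $\nu$-a.e.; then $c(\xi) \leq a < 0$ for $\nu$-a.e. $\xi$, which already rules out $(M_0,w_0) = (0,0)$. For every $s \geq 1$ the point $(sM_0, sw_0)$ lies in $\sym_0 \times \R^n$, and $sc(\xi) \leq c(\xi) \leq a$ $\nu$-a.e., so
\[I_\nu(sM_0, sw_0) = F(a)\,\nu(\St) = I_\nu(M_0,w_0),\]
producing an entire ray of global minimizers and contradicting uniqueness. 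Therefore $\lambda = n^{-1}\int F'(c)\,d\nu > 0$, and $\mu$ is non-negative, non-zero, centered, and isotropic. The conceptual heart of the argument is precisely this last step: first-order conditions are formal, but non-triviality genuinely requires combining the flat-region geometry of $F'$ with uniqueness of the minimum.
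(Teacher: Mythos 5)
Your proof is correct and its overall architecture matches the paper's: both derive the centered and isotropic conditions from the first-order stationarity of the constrained minimizer (Lagrange multiplier for the $\tr M = 0$ constraint, which is exactly your observation that $\sym_0$ is the Hilbert--Schmidt orthogonal complement of $\R\cdot\I$), and both reduce non-triviality to showing $\lambda = n^{-1}\int F'(c)\,d\nu > 0$. Where you diverge is the non-triviality step. The paper invokes its Theorem~\ref{thm_min_equivalences} (item~2), which says that the uniqueness hypothesis forces $\nu(\{\xi : \langle\xi, M\xi + w\rangle > 0\}) > 0$ for every nonzero $(M,w)\in\sym_0\times\R^n$; applied with $(M,w) = (M_0,w_0)$ and combined with $F'>0$ on $[0,\infty)$, this yields $\lambda>0$. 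You instead give a self-contained argument: since $F'$ is continuous, non-decreasing, non-negative, and positive at $0$, its zero set is either empty or a half-line $(-\infty,a]$ with $a<0$, and if $F'\circ c=0$ $\nu$-a.e.\ then the whole ray $\{(sM_0,sw_0): s\ge 1\}$ consists of global minimizers, contradicting uniqueness. Your route avoids the dependence on the separately-proved Theorem~\ref{thm_min_equivalences} by inlining a ray-of-minimizers contradiction that exploits the flat-region geometry of $F'$; the paper's Theorem~\ref{thm_min_equivalences} proof uses a somewhat different ray-and-convexity argument and is phrased for general $\nu$, so it does more work but is reusable (indeed the paper reuses it in the finite equivalence). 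A cosmetic advantage of your version: the paper's wording ``$\langle\xi,M_0\xi+w_0\rangle>0$ for a set of positive $\nu$-measure'' implicitly assumes $(M_0,w_0)\neq(0,0)$ (the edge case $(M_0,w_0)=(0,0)$ is harmless because $F'(0)>0$, but is glossed over), whereas your argument handles that case explicitly. Both proofs are valid.
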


Let us consider the situation where $\St$ is finite. In this case, a natural choice of $\nu$ is the counting measure $c$.
\begin{coro}
	\label{cor_isotropic_finite}
	Let $K$ be a convex body in \Lowner position and assume 
	\[\St = \{\xi_1, \ldots, \xi_m\}.\]
	Choose any $C^1$ function $F:\R \to \R$ that is non-negative, non-decreasing, convex, strictly convex in $[0,\infty)$, and assume $F'(0) > 0$.
	Consider the convex functional $I_c:\sym \times \R^n \to \R$ defined by
	\[I_c(M,w) = \sum_{i=1}^m F(\langle \xi_i, M \xi_i + w \rangle).\]
	If the restriction of $I_c$ to $\sym_0 \times \R^n$ has a unique global minimum $(M_0, w_0)$, then the numbers
	\[c_i = F'(\langle \xi_i, M_0 \xi_i + w_0 \rangle), \ i=1,\ldots, m\]
	together with the vectors $\xi_i , \ i=1,\ldots, m$, satisfy equation \eqref{eq_contactpoints}.
\end{coro}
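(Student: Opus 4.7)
The plan is to apply Theorem~\ref{thm_isotropic_general} directly, taking $\nu$ to be the counting measure $c = \sum_{i=1}^m \delta_{\xi_i}$. Since John's theorem guarantees $\St \ne \emptyset$ when $K$ is in \Lowner position, $c$ is a finite, positive, non-zero measure on $\St$, so the structural hypotheses on $\nu$ in Theorem~\ref{thm_isotropic_general} are all satisfied. Moreover, with this choice the functional $I_\nu$ reduces term-by-term to the finite sum $I_c$ in the corollary, so the unique global minimizer $(M_0, w_0) \in \sym_0 \times \R^n$ of $I_c$ postulated by the corollary is precisely the minimizer supplied to Theorem~\ref{thm_isotropic_general}.

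Given this translation, Theorem~\ref{thm_isotropic_general} produces a non-negative, non-zero, centered, isotropic measure
\[
\mu = F'(\langle \xi, M_0 \xi + w_0\rangle)\, dc(\xi) = \sum_{i=1}^m c_i\, \delta_{\xi_i},
\]
with $c_i = F'(\langle \xi_i, M_0 \xi_i + w_0\rangle)$ as in the statement. I would then unpack the defining integral identities of $\mu$: centeredness $\int \xi\, d\mu = 0$ reads as $\sum_i c_i \xi_i = 0$, while isotropy $\int \xi \otimes \xi\, d\mu = \lambda\, \I$ for some $\lambda$ reads as $\sum_i c_i \xi_i \otimes \xi_i = \lambda\, \I$. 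These are precisely the two identities of \eqref{eq_contactpoints}, once $\lambda \ne 0$ is confirmed.

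The only verification that is not purely mechanical is $\lambda \ne 0$. Taking traces in the isotropic identity yields $n\lambda = \sum_i c_i$; the $c_i$ are non-negative and cannot all vanish (by the non-vanishing of $\mu$ supplied by Theorem~\ref{thm_isotropic_general}), hence $\lambda > 0$. I do not foresee any real obstacle here: the corollary is a syntactic specialization of Theorem~\ref{thm_isotropic_general} to finitely-supported $\nu$, with the trace test converting ``$\mu \not\equiv 0$'' into ``$\lambda \ne 0$''.
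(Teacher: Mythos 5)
Your proposal is correct and matches the paper's intent exactly: the corollary is a direct specialization of Theorem~\ref{thm_isotropic_general} to $\nu=c$, with the trace identity $n\lambda=\sum_i c_i$ and the non-vanishing of the resulting measure giving $\lambda>0$. The paper does not even write out a proof, treating the corollary as immediate, which is consistent with your argument.
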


For any $K$ there is a canonical choice of measure $\nu$ given by 
\[\nu_K = C_0(\co(\St), \cdot),\]
where $C_0$ is the $0$-th curvature measure (see Section \ref{sec_preliminaries}), and will play a special role in Theorem \ref{thm_min_equivalences} below.
Depending on the set $\St$ and the measure $\nu$, the function $I_\nu$ might or might not have a minimum.
This can be a consequence of a ``bad choice'' of $\nu$, or of the fact that $\St$ is degenerate in some sense.
To make this precise we recall the following properties about John/\Lowner position.
A proof can be found for the symmetric case in \cite[proof of Theorem $2.1.10$ and Lemma $2.1.13$]{artstein2015asymptotic}.
In the general case, the proof is analogue.
\begin{theorem}
	\label{thm_lowner_equivalences}
	Let $L$ be any convex body.
	The following statements are equivalent
	\begin{enumerate}
		\item $L$ is in \Lowner position.
		\item $L \subseteq \B$ and for every $(M,w) \in (\sym_0 \times \R^n) \setminus (0,0)$ there exists $\xi \in \St$ for which $\langle \xi, M \xi + w \rangle \geq 0$.
		\item $L \subseteq \B$ and $(\frac 1n \I,0) \in \co\left( \{(\xi \otimes \xi, \xi) / \xi \in \St \} \right)$.
	\end{enumerate}
\end{theorem}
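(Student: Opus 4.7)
The plan is to prove $(1) \Leftrightarrow (3)$ by invoking the John--Ball theory already recalled in the introduction, and $(2) \Leftrightarrow (3)$ via Hahn--Banach separation in the space $\sym \times \R^n$.

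For $(1) \Rightarrow (3)$, I would apply Theorem \ref{thm_john} directly: it provides $\xi_i \in \S \cap \partial L$ and $c_i > 0$ with $\sum c_i \xi_i \otimes \xi_i = \lambda \I$ and $\sum c_i \xi_i = 0$. Taking traces gives $\sum c_i = n\lambda$, so after rescaling the weights to sum to $1$ one is forced into $\lambda = 1/n$ and obtains $(\tfrac{1}{n}\I, 0) \in \co(\{(\xi \otimes \xi,\xi) : \xi \in \S\cap \partial L\})$; the inclusion $L \subseteq \B$ is automatic since $\B$ is the \Lowner ellipsoid of $L$. The reverse $(3) \Rightarrow (1)$ is Ball's theorem, already quoted in the introduction. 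For the easy direction $(3) \Rightarrow (2)$, given $(M,w) \in (\sym_0 \times \R^n) \setminus \{(0,0)\}$ and a convex representation $(\tfrac{1}{n}\I, 0) = \sum c_i (\xi_i \otimes \xi_i, \xi_i)$, the pairing
\[
\sum c_i \langle \xi_i, M\xi_i + w \rangle \;=\; \tfrac{\tr(M)}{n} + \langle w, 0\rangle \;=\; 0
\]
uses $\tr(M)=0$ and forces at least one summand to be $\geq 0$, producing the required $\xi_i$.

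For $(2) \Rightarrow (3)$ I would argue by contrapositive via Hahn--Banach. The set $\S \cap \partial L$ is closed in the sphere, hence compact, so $C := \co(\{(\xi \otimes \xi, \xi) : \xi \in \S\cap\partial L\})$ is compact in $\sym \times \R^n$. If $(\tfrac{1}{n}\I, 0) \notin C$, strict separation yields (using the trace inner product to identify $\sym^* \cong \sym$) some $N \in \sym$ and $u \in \R^n$ with $\langle \xi, N\xi\rangle + \langle u, \xi\rangle < \tfrac{\tr(N)}{n}$ for all $\xi \in \S \cap \partial L$. Replacing $N$ by its trace-free correction $M := N - \tfrac{\tr(N)}{n}\I \in \sym_0$ and setting $w := u$, one obtains $\langle \xi, M\xi + w\rangle < 0$ for every contact point, directly contradicting (2); nonvanishing of $(M,w)$ is automatic, since $(0,0)$ would force $0 < 0$.

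The main delicacy I foresee is the trace bookkeeping in $(2) \Leftrightarrow (3)$: one has to translate between an arbitrary separating $N \in \sym$ and its trace-zero projection $M$, and verify that this correction preserves the strict inequality as well as nonvanishing. Beyond that, the argument is a direct appeal to the already cited theorems of John and Ball combined with standard convex separation, with the added technical observation that $\S \cap \partial L$ must be nonempty in the relevant cases (a body strictly inside $\B$ can satisfy neither (2) nor (3), so the equivalence there is vacuous).
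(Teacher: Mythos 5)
The paper itself does not prove Theorem~\ref{thm_lowner_equivalences}: it merely refers the reader to \cite[proof of Theorem 2.1.10 and Lemma 2.1.13]{artstein2015asymptotic} for the symmetric case and asserts the general case is analogous. So there is no in-paper proof to compare against; your task is really to supply the argument the paper elides, and you have done so correctly. The route you take---(1)$\Leftrightarrow$(3) via the John/Ball theorems quoted in the introduction, (3)$\Rightarrow$(2) by pairing a finite Carath\'eodory representation of $(\tfrac1n\I,0)$ against $(M,w)$ and using $\tr(M)=0$, and (2)$\Rightarrow$(3) by strict Hahn--Banach separation in $\sym\times\R^n$ followed by the trace-free projection $M=N-\tfrac{\tr N}{n}\I$---is exactly the standard one and almost certainly what the cited lemma does. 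The trace bookkeeping you flag as delicate in fact works cleanly: for $\xi\in\S$ one has $\langle\xi,M\xi\rangle=\langle\xi,N\xi\rangle-\tfrac{\tr N}{n}$, so the shift by $\tfrac{\tr N}{n}$ absorbs on both sides, and $(M,w)=(0,0)$ is ruled out because it would force the strict inequality $0<0$ at any contact point.

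One small precision worth tightening: your closing remark that nonemptiness of $\St$ is ``automatic'' via Hahn--Banach does not quite parse, because Hahn--Banach requires a nonempty set to separate from. If $\St=\emptyset$ (equivalently, $L$ sits strictly inside $\B$), the set $C$ is empty and the separation argument is void; but then all three statements fail outright---(1) because $\B$ can be shrunk, (2) because the required $\xi$ does not exist, (3) because $(\tfrac1n\I,0)\notin\emptyset$---so the equivalence holds trivially. It would be cleaner to dispose of this degenerate case up front rather than frame it as a byproduct of separation. With that adjustment the argument is complete and sound.
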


Recalling that $K$ is always assumed to be in \Lowner position, we can prove:
\begin{theorem}
	\label{thm_min_equivalences}
	The following statements are equivalent
	\begin{enumerate}
		\item The restriction of $I_\nu$ to $\sym_0 \times \R^n$ has a unique global minimum $(M_0, w_0)$.
		\item For every $(M,w) \in (\sym_0 \times \R^n) \setminus (0,0)$ 
			\[\nu(\{\xi \in \St / \langle \xi, M \xi + w \rangle > 0\}) > 0.\]

If $\nu = \nu_K$ or if $\St$ is finite and $\nu = c$, the statements above are also equivalent to the following:

		\item $(\frac 1n \I,0)$ lies in the interior of $\co\left( \{(\xi \otimes \xi, \xi) / \xi \in \St \} \right) \subseteq \sym_1 \times \R^n$, where the interior is taken with respect to $\sym_1 \times \R^n$.
	\end{enumerate}
\end{theorem}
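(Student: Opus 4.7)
My plan is to split the equivalences into two parts: first prove (1) $\iff$ (2), which holds for any choice of $\nu$, and then (2) $\iff$ (3) under the additional hypothesis $\nu = c$ (with $\St$ finite) or $\nu = \nu_K$. Throughout I abbreviate $\phi_{M,w}(\xi) = \langle \xi, M\xi + w\rangle$, so that $(M,w) \mapsto \phi_{M,w}(\xi)$ is linear for each fixed $\xi$ and $I_\nu(M,w) = \int F(\phi_{M,w}(\xi))\,d\nu(\xi)$ is convex on $\sym \times \R^n$.

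For (1) $\Rightarrow$ (2) I argue by contraposition. If some nonzero $(M^*, w^*) \in \sym_0 \times \R^n$ satisfies $\phi_{M^*,w^*} \leq 0$ $\nu$-a.e., then along the ray $(M_0, w_0) + t(M^*, w^*)$ with $t > 0$ the integrand $F \circ \phi$ is pointwise non-increasing in $t$ (since $F$ is non-decreasing), hence $I_\nu$ is non-increasing along the ray, precluding a unique global minimum at $(M_0, w_0)$. For (2) $\Rightarrow$ (1), I first establish coercivity of $I_\nu|_{\sym_0 \times \R^n}$: given $(M_k, w_k) \in \sym_0 \times \R^n$ with $\|(M_k, w_k)\| \to \infty$, extract a subsequential limit direction $(M^*, w^*)$ of $(M_k, w_k)/\|(M_k, w_k)\|$; by (2), there exists $\delta > 0$ with $\nu(\{\phi_{M^*, w^*} > \delta\}) > 0$, and Fatou, together with $F(x) \to \infty$ as $x \to \infty$ (a consequence of strict convexity of $F$ on $[0,\infty)$ and $F'(0) > 0$), forces $I_\nu(M_k, w_k) \to \infty$. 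Existence of a minimum follows; for uniqueness, if $(M_0,w_0) \ne (M_1,w_1)$ were both minima, writing $(M^*, w^*) = (M_1 - M_0, w_1 - w_0)$, convexity makes $t \mapsto I_\nu((M_0,w_0) + t(M^*, w^*))$ affine on $[0,1]$, and a Jensen-type argument then makes $t \mapsto F(\phi_{M_0,w_0}(\xi) + t\phi_{M^*,w^*}(\xi))$ affine on $[0,1]$ for $\nu$-a.e.\ $\xi$; strict convexity of $F$ on $[0,\infty)$ confines the segment $[\phi_{M_0,w_0}(\xi), \phi_{M_1,w_1}(\xi)]$ to $(-\infty, 0]$ on $\{\phi_{M^*,w^*} \ne 0\}$, and combining this with the first-order optimality condition $\int F'(\phi_{M_0,w_0})\phi_{M^*,w^*}\,d\nu = 0$ (from the proof of Theorem \ref{thm_isotropic_general}) and the \Lowner position of $K$ via Theorem \ref{thm_lowner_equivalences} yields the contradiction $(M^*, w^*) = 0$.

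For (2) $\iff$ (3) I use the identity $\phi_{M,w}(\xi) = \langle (M, w),\, (\xi \otimes \xi - \tfrac 1n \I,\, \xi)\rangle$, valid for $(M,w) \in \sym_0 \times \R^n$ (the traceless condition eliminates the $\tfrac 1n \I$-term). A standard convex separation argument in the affine space $\sym_1 \times \R^n$ shows that $(\tfrac 1n \I, 0)$ lies in the relative interior of $\co\{(\xi \otimes \xi, \xi) : \xi \in \St\}$ iff for every nonzero $(M, w) \in \sym_0 \times \R^n$ there exists $\xi \in \St$ with $\phi_{M,w}(\xi) > 0$. When $\nu = c$ and $\St$ is finite this pointwise statement is exactly (2). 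When $\nu = \nu_K$, I pass between the pointwise formulation and (2) using the property of $C_0(\co(\St), \cdot)$ recalled in Section \ref{sec_preliminaries} that every relatively open nonempty subset of $\St$ receives positive $\nu_K$-mass; the set $\{\phi_{M,w} > 0\} \cap \St$ is relatively open in $\St$, so non-emptiness is equivalent to positive $\nu_K$-measure.

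The main obstacle I anticipate is the uniqueness step in (2) $\Rightarrow$ (1): since $F$ is not assumed strictly convex on all of $\R$, the functional $I_\nu$ may have flat directions on which $\phi$ stays in the negative affine region of $F$, so strict convexity of $I_\nu$ alone is unavailable. The contradiction must therefore carefully couple the Jensen-type constancy of $F$ on the relevant segment with both the first-order optimality condition at $(M_0, w_0)$ (which already forces $F'(\phi_{M_0,w_0})\,d\nu$ to be centered and isotropic) and the geometric hypothesis that $K$ is in \Lowner position.
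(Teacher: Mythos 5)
Your overall architecture matches the paper: split into $(1)\Leftrightarrow(2)$, which holds for general $\nu$, and $(2)\Leftrightarrow(3)$ under the extra hypotheses. The implication $(1)\Rightarrow(2)$ (contraposition along a ray using that $F$ is non-decreasing), your coercivity argument in $(2)\Rightarrow(1)$ (normalize, pass to a subsequential limit direction, Fatou), and the $(2)\Leftrightarrow(3)$ equivalence (the identity $\langle\xi,M\xi+w\rangle=\langle(M,w),(\xi\otimes\xi-\tfrac1n\I,\xi)\rangle$ for traceless $M$, plus a separating-hyperplane characterization of the relative interior and Proposition~\ref{thm_Csupport}) all coincide with the paper's route in substance, modulo minor rewording; for $\nu=\nu_K$ both you and the paper implicitly use that every relatively open nonempty subset of $\St$ carries positive $\nu_K$-mass, which is a touch stronger than the literal statement of Proposition~\ref{thm_Csupport}.

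The genuine gap is precisely where you flag it: the uniqueness half of $(2)\Rightarrow(1)$. Your plan is to assume two distinct minimizers, deduce via Jensen that $F(\phi_{M_0,w_0}+t\phi_{M^*,w^*})$ is affine in $t$ for $\nu$-a.e.\ $\xi$, conclude that the segment $[\phi_{M_0,w_0}(\xi),\phi_{M_1,w_1}(\xi)]$ lies in $(-\infty,0]$ wherever $\phi_{M^*,w^*}(\xi)\neq0$, and then close via the first-order condition $\int F'(\phi_{M_0,w_0})\phi_{M^*,w^*}\,d\nu=0$ together with the \Lowner characterization. That last synthesis is not carried out and does not follow readily: the sign information you have on $\phi_{M_0,w_0}$ lives on $\{\phi_{M^*,w^*}\neq0\}$, while Theorem~\ref{thm_lowner_equivalences} only yields a single $\xi$ with $\langle\xi,M^*\xi+w^*\rangle\geq0$, and the first-order identity degenerates to $\int_{\{\phi_{M^*,w^*}\neq0\}}F'(\phi_{M_0,w_0})\phi_{M^*,w^*}\,d\nu=0$, which carries no obvious sign contradiction (subtracting the analogous identity at $(M_1,w_1)$ only re-derives $F'(\phi_{M_0,w_0})=F'(\phi_{M_1,w_1})$ on $\{\phi_{M^*,w^*}\neq0\}$). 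Your worry that ``strict convexity of $I_\nu$ alone is unavailable'' is why you went this way, but it slightly misreads the paper's move: the paper does not need strict convexity of $F$ on all of $\R$. It instead splits the integral over $\{\xi^{(\lambda)}>0\}$ and its complement, using that for $\lambda\in(0,1)$ with $(M_\lambda,w_\lambda)\neq0$ condition $(2)$ forces $\nu(\{\xi^{(\lambda)}>0\})>0$, and on that region the convex combination $\xi^{(\lambda)}=\lambda\xi^{(0)}+(1-\lambda)\xi^{(1)}$ lands in $(0,\infty)$ where $F$ is strictly convex, so the pointwise inequality $F(\xi^{(\lambda)})\le\lambda F(\xi^{(0)})+(1-\lambda)F(\xi^{(1)})$ is strict whenever $\xi^{(0)}\neq\xi^{(1)}$ there. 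That is the intended mechanism, and you should adopt it; note however that even this requires you to verify that the set $\{\xi^{(\lambda)}>0\}\cap\{\xi^{(0)}\neq\xi^{(1)}\}$ has positive $\nu$-measure, a point the paper passes over quickly, so it is worth spelling out that check rather than appealing to first-order conditions, which do not close the argument on their own.
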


Notice that these conditions do not depend on the choice of $F$ but only on the measure $\nu$.

Even though Theorem \ref{thm_isotropic_general} appears to have no relation to Theorem \ref{thm_shiri}, both of them stem from a construction that we present next.
For any $r \in (1/2,1)$ and any function $h:\R \to \R$ define 
\[h_r(s) = h\left(\frac{s-1}{1-r}\right).\]
Let us fix two measurable functions $f,g:\R \to \R$.
We define the functional $L_r:\M \times \R^n \to \R$ by
\[L_r(A,v) = \frac 1{1-r} \int_{\R^n} f_r(|A x + v|_2) g_r(\|x\|_{K}) dx,\]
and the functional $I_r:B_r \times \R^n \subseteq \sym \times \R^n \to \R$ by
\begin{align}
	I_r(M, w) 
	&= \frac 1{1-r} \int_{\R^n} f_r(|x|_2) g_r\left(\left\|(\I + (1-r) M)^{-1} (x - (1-r)w)\right\|_{K}\right) dx\\
	&= \frac 1{1-r} \int_{\R^n} f_r(|x|_2) g_r\left(\left\|x - (1-r)w \right\|_{(\I+(1-r)M)K}\right) dx.
\end{align}
The domain $B_r$ is the set of matrices $M$ such that $\I+(1-r)M$ is invertible, and in particular it contains the ball $B(0, (1-r)^{-1})$ in the operator norm.
It is important to notice that $L_r(A,v) = I_r\left(\frac{A-\I}{1-r},\frac v{1-r}\right)$ for $(A,v) \in \SL \times \R^n$.
Taking $f = 1_{[-1,\infty)}, g = 1_{(-\infty, 0]}$ one obtains for $(A,v) \in \SL \times \R^n$,
\[L_r(A,v) = \frac 1{1-r} \vol((A K+v) \setminus r \B)\]
meaning that a minimum $(A_r,v_r)$ of the restriction of $L_r$ to $(\SL \cap \sym_+) \times \R^n$ induces a maximal intersection position of radius $r$.

The functional $L_r$ satisfies the invariance property $L_r(O A, O v) = L_r(A,v)$ for every orthogonal matrix $O$.
By polar decomposition, it suffices to know the behaviour of $L_r$ restricted to $\symp \times \R^n$.
For example, a global minimum of the restriction of $L_r$ to $\symp \times \R^n$ is also a global minimum in $\M \times \R^n$.

If the function $f$ is smooth then $L_r$ becomes a smooth functional (in Section 2 this statement is made precise).
Let $(A_r, v_r)$ be a global minimum of the restriction of $L_r$ to $(\symp \cap \SL) \times \R^n$,
and hence, a global minimum of the restriction of $L_r$ to the smooth hypersurface $\SL \times \R^n \subseteq \M \times \R^n$.
Considering the Frobenius inner product in $\M$ given by 
\[\langle A, B \rangle = \tr(A^T B) = \sum_{i,j} A_{i,j} B_{i,j},\] 
and the product in $\M \times \R^n$ given by 
\begin{equation}
	\label{eq_innerproduct}
	\langle (A,v), (B,w) \rangle = \langle A, B \rangle + \langle v, w\rangle,
\end{equation}
we can write the equation of Lagrange multipliers $ \lambda \nabla \det(A_r) = \nabla L_r(A_r, v_r) $ (the approach of Lagrange multipliers was used already in \cite{bastero2006dual}) and we get
\begin{align}
	\lambda& (A_r^{-T}, 0)
	= \frac 1{1-r} \int_{\R^n} (f_r)'(|A_r x + v_r|_2) g_r(\|x\|_K) \left( \frac{A_r x + v_r }{|A_r x + v_r|_2} \otimes x, \frac{A_r x + v_r }{|A_r x + v_r|_2}\right) dx \\
	&= \frac 1{(1-r)^2} \int_{\R^n} (f')_r(|x|_2) g_r(\|A_r^{-1}(x-v_r)\|_K) \left( \left( \frac{x}{|x|_2} \otimes A_r^{-1}(x-v_r) \right), \frac{x}{|x|_2} \right) dx \\
	&= \frac 1{(1-r)^2} \int_{\R^n} (f')_r(|x|_2) g_r(\|A_r^{-1}(x-v_r)\|_K) \left( \left( \frac{x}{|x|_2} \otimes x - \frac{x}{|x|_2} \otimes v_r \right)A_r^{-T} , \frac{x}{|x|_2} \right) dx 
\end{align}
which implies
\begin{align}
	\label{eq_isotropic_Rn}
	\frac 1{1-r}\int_{\R^n} \frac{ (f')_r(|x|_2) }{|x|_2} g_r(\|A_r^{-1}(x-v_r)\|_K) \left( x \otimes x \right) dx &= \lambda_r \I \\
	\frac 1{1-r}\int_{\R^n} \frac{ (f')_r(|x|_2) }{|x|_2} g_r(\|A_r^{-1}(x-v_r)\|_K) x dx &= 0\\
\end{align}
(here the gradients are taken with respect to the whole space $\M \times \R^n$).

Under some conditions of $f,g$ the measure $\frac 1{1-r} (f')_r(|x|_2) g_r(\|A_r^{-1}(x-v_r)\|_K) dx$ concentrates near $\St$ as $r \to 1^-$ and converges for some sequence $r_k \to 1^-$ to a centered isotropic measure, as in Theorem \ref{thm_shiri}.
In connection with Theorem \ref{thm_shiri}, informally speaking, if $f = 1_{[-1, \infty)},g = 1_{(-\infty, 0]}$ as before, then $(f')_r$ has a Dirac delta at $r$, and the integrals above can be interpreted in the distributional sense, as integrals over $r \S$
\begin{align}
	\int_{A_r K+v_r} \frac{(f')_r(|x|_2)}{|x|_2} \left( x \otimes x\right) dx &= \frac 1r \int_{(A_r K + v_r) \cap r \S} \left( \xi \otimes \xi\right) d\xi = \tilde\lambda_r \I\\
	\int_{A_r K+v_r} \frac{(f')_r(|x|_2)}{|x|_2} x dx &= \frac 1r \int_{(A_r K + v_r) \cap r \S} \xi d\xi = 0,\\
\end{align}
thus obtaining, at least heuristically, the isotropic measure in Theorem \ref{thm_shiri}.
To formalize this result we need an approximation argument.
This technical computation is done in \cite[Theorem 2.6]{artstein2018isotropic} and it will not be reproduced in this paper.

Unfortunately, this choice of $f$ and $g$ do not give $L_r$ the desirable properties to work with critical point theory.
For instance, it is not known in general if the maximal intersection position of radius $r$ is unique, because $L_r$ is not convex (see \cite[Section 4]{artstein2018isotropic} and \cite{bochi2018approximation} for discussions about the uniqueness problem).
Instead, we will assume the following properties for $f$ and $g$.
\begin{enumerate}[label={\bf f\arabic*}, ref = {\bf f\arabic*}]
	\item $f$ is locally Lipschitz. \label{f_smooth} \label{fg_first}
	\item $f$ is convex. \label{f_convex}
	\item $f(x) = 0$ for $x \leq -1$. \label{f_zeroleft}
	\item $f$ is strictly increasing in $[-1, \infty)$. \label{f_inc}
\end{enumerate}
\begin{enumerate}[label={\bf g\arabic*}, ref = {\bf g\arabic*}]
	\item $g$ is locally Lipschitz. \label{g_smooth}
	\item $g$ is non-increasing. \label{g_decpos}
	\item $g(x) = 1$ for $x \leq -1$. \label{g_1}
	\item $g(x) > 0$ for $x \in (-1,1)$. \label{g_pos}
	\item $g(x) = 0$ for $x \geq 1$. \label{g_0} \label{fg_last}
\end{enumerate}
Two functions satisfying \ref{fg_first} to \ref{fg_last} are
\[
	f(x) = 
		\begin{cases}
			0 ,& \text{ if } x \leq -1\\
			x+1 ,& \text{ if } x>-1
		\end{cases}
	,\quad
	g(x) = 
		\begin{cases}
			1 ,& \text{ if } x \leq -1\\
			\frac {1-x}2 ,&\text{ if } x \in (-1,1)\\
			0 ,& \text{ if } x \geq 1
		\end{cases}.
\]
\begin{figure}[t]
	\centering
	\begin{minipage}{0.49 \textwidth}
		\centering
		\includegraphics[width=.9 \textwidth]{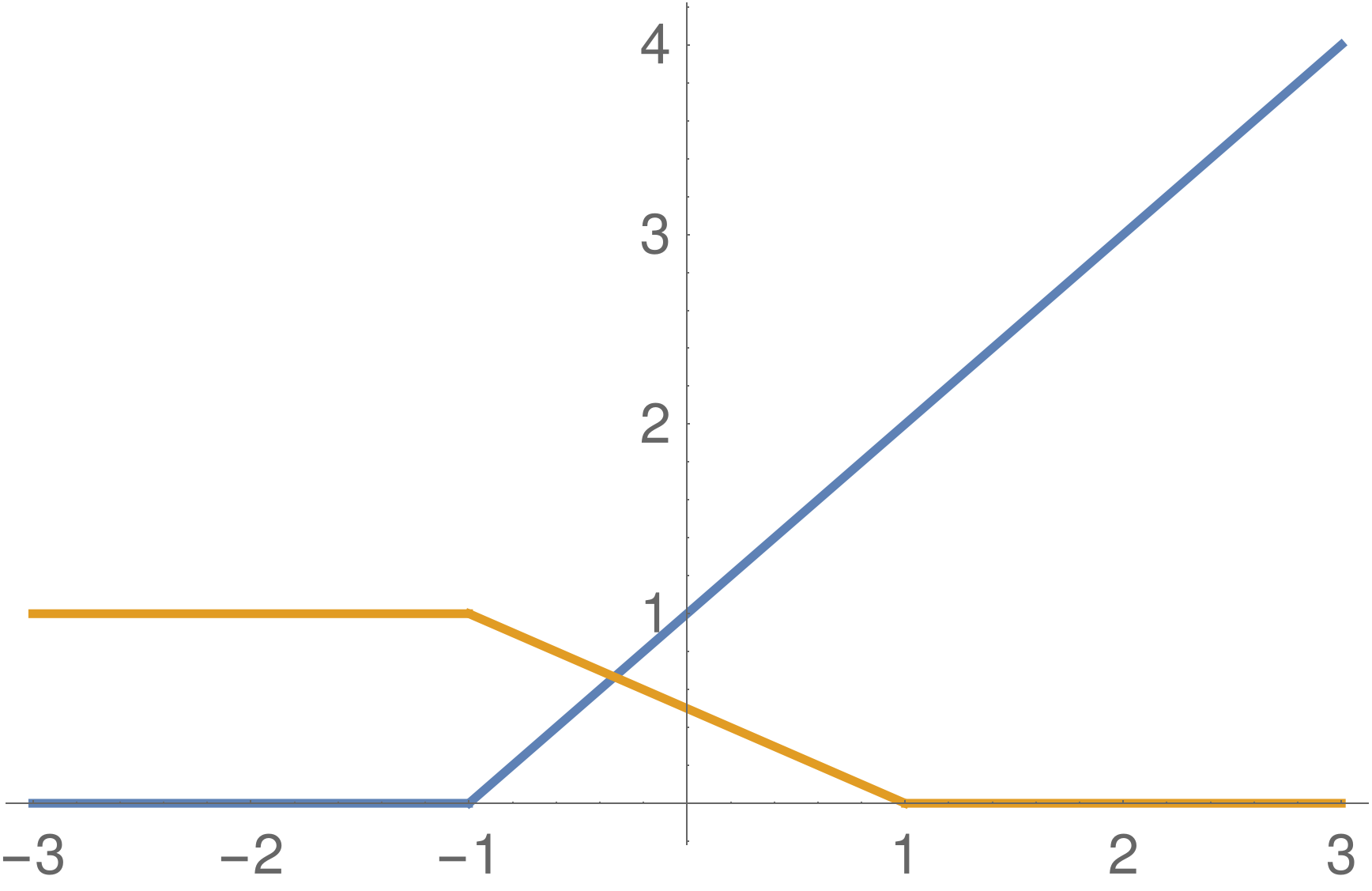}
	\caption{ functions $f$ (blue) and $g$ (orange),}
	\end{minipage}
	\hfill
	\begin{minipage}{0.49 \textwidth}
		\centering
	\includegraphics[width=.9 \textwidth]{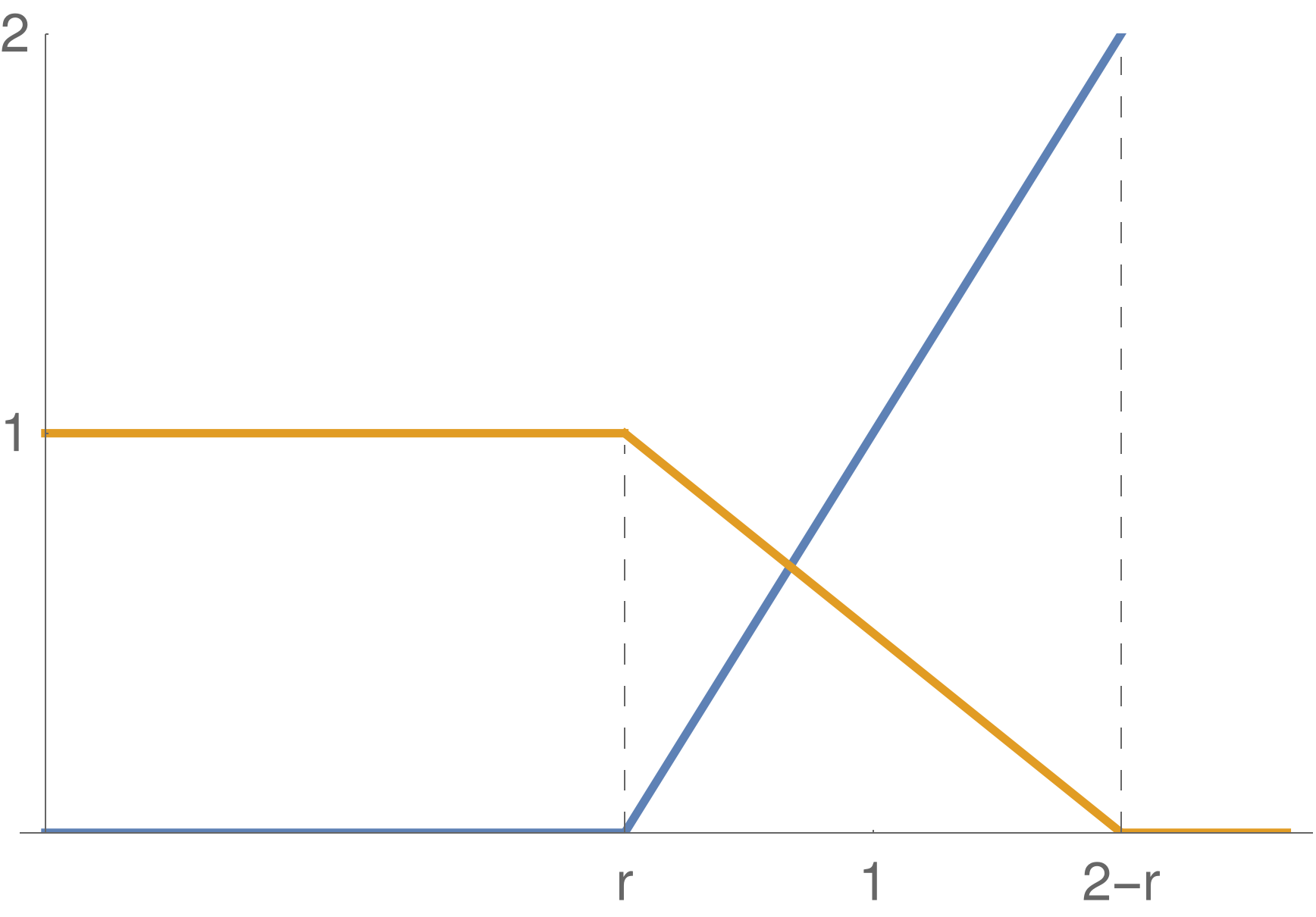}
		\caption{ functions $f_r$ (blue) and $g_r$ (orange) }
	\end{minipage}
\end{figure}
These conditions (specially \ref{f_convex}) will guarantee that $L_r$ has a unique global minimum $(A_r,v_r) \in (\SL \cap \sym_+) \times \R^n$ for $r \in (1/2,1)$.
In Section 3 we shall prove:
\begin{theorem}
	\label{thm_min_existence}
	Let $K$ be a convex body in \Lowner position and let $f,g$ satisfy all the properties \ref{fg_first} to \ref{fg_last}, then for every $r \in (1/2,1)$ the restriction of $L_r$ to $(\symp \cap \SL) \times \R^n$ has a unique minimum $(A_r, v_r)$ with $\lim_{r \to 1^-} (A_r, v_r) = (\I, 0)$.
	Likewise, the restriction of $I_r$ to 
	\[ \left( \frac{(\SL\cap \sym_+) -\I}{1-r} \right) \times \R^n\]
	has the unique minimum $(M_r, w_r) = (\frac{A_r-\I}{1-r}, \frac{v_r}{1-r})$
	with $\tr\left(\frac{M_r}{\|M_r\|_F} \right) \to 0$ as $r \to 1^-$.
\end{theorem}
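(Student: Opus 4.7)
The plan is to exploit the convexity of $L_r$ on $\M\times\R^n$ together with the geometric rigidity of the \Lowner position. For each fixed $x$, the map $(A,v)\mapsto |Ax+v|_2$ is convex; composing with the convex non-decreasing $f_r$ (by \ref{f_convex} and \ref{f_inc}) and integrating against the non-negative, compactly supported density $g_r(\|x\|_K)\,dx$ shows that $L_r$ is continuous and convex on $\M\times\R^n$. The orthogonal invariance $L_r(OA,Ov)=L_r(A,v)$ together with polar decomposition then reduces the minimization over $\SL\times\R^n$ to one over $(\SL\cap\symp)\times\R^n$. For existence I would verify coercivity: since the \Lowner position places $0$ in the interior of $K$, if either $|v|\to\infty$ or $\|A\|_{\mathrm{op}}\to\infty$ along a sequence in $\SL\cap\symp$, then a fixed-positive-measure subset of $K$ exhibits $|Ax+v|_2\to\infty$, forcing $L_r\to\infty$; hence level sets are compact and a minimizer exists.

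The main obstacle is uniqueness, since $\SL\cap\symp$ is not convex. Suppose $(A_0,v_0)\neq(A_1,v_1)$ are two minima and let $(\bar A,\bar v)$ be their midpoint. Minkowski's determinant inequality gives $\det(\bar A)^{1/n}\geq 1$, with equality iff $A_0=A_1$. When $A_0\neq A_1$, set $\lambda=\det(\bar A)^{-1/n}<1$; then $(\lambda\bar A,\lambda\bar v)\in(\SL\cap\symp)\times\R^n$ and, since $f_r$ is non-decreasing,
\[ L_r(\lambda\bar A,\lambda\bar v)\leq L_r(\bar A,\bar v)\leq \tfrac12(L_r(A_0,v_0)+L_r(A_1,v_1))=L_r(A_0,v_0),\]
so all inequalities are equalities. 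The map $\mu\mapsto L_r(\mu\bar A,\mu\bar v)$ is therefore constant on $[\lambda,1]$, and combined with the strict monotonicity of $f_r$ on $[r,\infty)$ (from \ref{f_inc}) this forces $|\bar A x+\bar v|_2\leq r$ for a.e.\ $x$ with $g_r(\|x\|_K)>0$, whence $\bar A K+\bar v\subseteq r\B$. Then $K$ is contained in the ellipsoid $\bar A^{-1}(r\B-\bar v)$ of volume $r^n\det(\bar A)^{-1}\vol(\B)<\vol(\B)$, contradicting that $\B$ is the \Lowner ellipsoid of $K$. When instead $A_0=A_1$ but $v_0\neq v_1$, convexity in $v$ and minimality force $t\mapsto L_r(A_0,v_0+t(v_1-v_0))$ to be constant on $[0,1]$, and the same strict-monotonicity argument gives $|A_0x+v_0|_2\leq r$ for a.e.\ $x$ in the support, yielding the same \Lowner contradiction.

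For the asymptotic statement, note first that $L_r(\I,0)=O(1)$: the set $\{x\in(2-r)K:|x|_2>r\}$ has volume $O(1-r)$ (using $K\subseteq\B$ and $(2-r)^n-1=O(1-r)$), and on it $f_r$ and $g_r$ are bounded, absorbing the $(1-r)^{-1}$ prefactor. Hence $L_r(A_r,v_r)=O(1)$ uniformly in $r$, and the coercivity estimates, applied uniformly, show $\{(A_r,v_r)\}$ is precompact. For any subsequential limit $(A^*,v^*)\in(\SL\cap\symp)\times\R^n$, if $A^*K+v^*\not\subseteq\B$ then on a positive-measure subset of $K$ one has $|A_rx+v_r|_2\geq 1+\delta/2$ for some $\delta>0$ and $r$ close to $1$, so $f_r(|A_rx+v_r|_2)\to\infty$ and $L_r(A_r,v_r)\to\infty$, contradicting boundedness. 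Therefore $A^*K+v^*\subseteq\B$ and, since $\det(A^*)=1$, $K$ lies in the ellipsoid $(A^*)^{-1}(\B-v^*)$ of volume $\vol(\B)$; uniqueness of the \Lowner ellipsoid forces $A^*=\I$, $v^*=0$, so the full family converges. Finally, expanding $\det(\I+(1-r)M_r)=1$ as $(1-r)\tr(M_r)=-\sum_{k=2}^n(1-r)^k\sigma_k(M_r)$ with $|\sigma_k(M_r)|\leq C_k\|M_r\|_F^k$, and using $(1-r)\|M_r\|_F\leq \|A_r-\I\|_F\to 0$, we obtain $|\tr(M_r)|/\|M_r\|_F\to 0$.
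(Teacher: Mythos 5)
Your proposal is correct and follows the same overall architecture as the paper (convexity + coercivity for existence, precompactness plus uniqueness of the \Lowner ellipsoid for the limit, Taylor expansion of the determinant for the trace asymptotics), but the uniqueness argument is genuinely different. The paper proves uniqueness by showing that equality in the convexity chain forces $A_0 x + v_0$ and $A_1 x + v_1$ to be parallel for $x$ in a set of positive measure, and then invokes a separate Lemma (\ref{lem_affine_multiplies}) asserting that two affine maps that are pointwise parallel on an open set must be scalar multiples of each other; the conclusion $A_0 = aA_1$, $v_0 = a v_1$ is then ruled out using the determinant. You instead work with the midpoint $(\bar A,\bar v)$: Minkowski's determinant inequality gives $\det(\bar A) > 1$ when $A_0 \ne A_1$, the rescaled point $\det(\bar A)^{-1/n}(\bar A,\bar v)$ lies back in the constraint set, and monotonicity of $\mu\mapsto f_r(\mu|\bar A x+\bar v|)$ combined with the pinched inequalities forces $\bar A K+\bar v\subseteq r\B$, which contradicts the \Lowner position because the resulting ellipsoid has volume $r^n\det(\bar A)^{-1}\vol(\B)<\vol(\B)$. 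This avoids Lemma \ref{lem_affine_multiplies} entirely, reusing instead the same \Lowner-positivity mechanism that the paper uses in Proposition \ref{prop_Lposconvex}; it is a cleaner route in that sense.

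Two small points worth tightening. In the residual case $A_0=A_1$, $v_0\ne v_1$, you cite ``the same strict-monotonicity argument,'' but the segment $t\mapsto v_0+t(v_1-v_0)$ does not rescale the argument of $f_r$ multiplicatively, so constancy of the integral along the segment gives affinity, not constancy, of the pointwise integrand; one then uses strict convexity of $t\mapsto|A_0x+v_0+t(v_1-v_0)|_2$ for a.e.\ $x$ (the exceptional parallel set being a line, hence null) together with convexity and strict monotonicity of $f_r$ on $[r,\infty)$ to conclude $|A_0x+v_0|_2,|A_0x+v_1|_2\le r$, and then convexity of the norm carries this to all $t\in[0,1]$. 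Secondly, for the convergence $(A_r,v_r)\to(\I,0)$ you implicitly need $g_r(\|x\|_K)$ to stay bounded below on a positive-measure subset of the blow-up region as $r\to1^-$; since $g_r(\|x\|_K)\to 1$ locally uniformly on the interior of $K$, this is fine, but the paper's Fatou-lemma phrasing handles this more transparently.
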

Observe that as $r \to 1^-$, the matrices $M_r$ get close to the tangent space $T_\I (\SL \cap \sym) = \sym_0$.
The advantage of working with the functional $I_r$, and the connection with Theorem \ref{thm_isotropic_general} are made clear by the following theorem:
\begin{theorem}
	\label{thm_r1extension}
	Assume $K$ has a $C^1$-smooth boundary and all the properties \ref{fg_first} to \ref{fg_last} are satisfied.
	The functional $I_r(M,v)$ is extends continuously to $r=1$ as
	\[
		I_1(M, v) = \int_\St F(\langle \xi, M \xi + v\rangle ) d\xi,
	\]
	where $F$ is the convolution $F(x) = f * \bar g(x), \bar g(x) = g(-x)$ and satisfies the conditions of Theorem \ref{thm_isotropic_general}.
	Moreover, $I_r \to I_1$ as $r \to 1^-$, uniformly in compact sets.
\end{theorem}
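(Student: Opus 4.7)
My plan is to pass to polar coordinates in $\R^n$, rescale the radial variable, and identify the limit using the $C^1$-smoothness of $\partial K$ at contact points. Specifically, I would write $x = t\xi$ with $\xi \in \S$ and $t > 0$, so that $dx = t^{n-1}\, dt\, d\xi$, and then substitute $s = (t-1)/(1-r)$. The prefactor $1/(1-r)$ is absorbed by $dt = (1-r)\, ds$, converting $f_r(t)$ into $f(s)$ and producing a Jacobian $(1+(1-r)s)^{n-1}$ that tends to $1$.

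The key step is the asymptotic behaviour of the argument of $g_r$. Because $K \subseteq \B$ is in \Lowner position and $\partial K$ is $C^1$, at each $\xi \in \St$ the surfaces $\partial K$ and $\S$ must share the same outer unit normal; by $1$-homogeneity of the Minkowski gauge this forces $\nabla \|\cdot\|_K(\xi) = \xi$. Combined with the expansion $(\I+(1-r)M)^{-1} = \I - (1-r)M + O((1-r)^2)$, a direct Taylor expansion of the gauge gives
\[ \frac{\|(1+(1-r)s)\xi - (1-r)w\|_{(\I+(1-r)M)K} - 1}{1-r} = s - \langle \xi, M\xi + w\rangle + o(1), \]
where the error is uniform for $(s,\xi,M,w)$ in bounded sets and $\xi$ in a neighborhood of $\St$. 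For $\xi \in \S \setminus \St$ one has $\|\xi\|_K > 1$, so the same quotient tends to $+\infty$ on the range where $f(s) > 0$, and $g$ vanishes there by \ref{g_0}.

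The integrand has $s$-support uniformly bounded (by \ref{f_zeroleft} and \ref{g_0}) and is bounded pointwise by the local Lipschitzness of $f$ and $g$. Dominated convergence then yields
\[ \lim_{r\to 1^-} I_r(M,w) = \int_\St \int_\R f(s)\, g(s - \langle \xi, M\xi + w\rangle)\, ds\, d\xi, \]
and the inner integral is recognised as $(f * \bar g)(\langle \xi, M\xi + w\rangle) = F(\langle \xi, M\xi + w\rangle)$. Uniform convergence on compact subsets of $\sym \times \R^n$ follows because the remainder in the Taylor expansion is uniform in $(M,w)$ on such sets.

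Finally, I would check that $F = f * \bar g$ meets the hypotheses of Theorem \ref{thm_isotropic_general}: non-negativity, monotonicity and convexity are inherited from the corresponding properties of $f$ together with $g \geq 0$; $C^1$-regularity follows from local Lipschitzness of $f,g$ combined with the compact support of $g$; and the strictness of convexity on $[0,\infty)$ together with $F'(0) > 0$ are extracted from \ref{f_inc} and \ref{g_pos}. I expect the main technical obstacle to be exactly the uniform first-order expansion of $\|\cdot\|_K$ near $\St$, where the $C^1$-hypothesis provides continuity of $\nabla\|\cdot\|_K$ and compactness of $\S$ upgrades pointwise estimates to uniform ones.
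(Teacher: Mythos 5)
Your proposal follows essentially the same route as the paper's proof: polar coordinates, the rescaling $t=1+(1-r)s$, the first-order Taylor expansion of the gauge combined with the observation that $n^K(\xi)=\xi$ (hence $\nabla\|\cdot\|_K(\xi)=\xi$) for $\xi\in\St$, the split of the angular integral into $\St$ and $\S\setminus\partial K$ with the latter killed by \ref{g_0}, dominated convergence, and identification of the inner integral as $F=f*\bar g$; the verification of the hypotheses on $F$ also mirrors the paper's $F''=\bar g'*f'\ge0$ argument, with the strict convexity on $[0,\infty)$ traced back to \ref{f_inc}. The only real caveat is that the phrase ``$C^1$-regularity follows from local Lipschitzness of $f,g$ combined with the compact support of $g$'' should be phrased more carefully ($g$ is not compactly supported; what matters is that \ref{f_zeroleft} and \ref{g_0} together restrict the convolution integrand to a bounded interval), but this does not affect the substance.
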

Thus we obtain $I_1 = I_{\sigma}$ where $\sigma$ is the $n-1$ dimensional Hausdorff measure restricted to $\St$.
As in the remark after Theorem \ref{thm_shiri} with respect to the measure $\nu_r$, the measure $(f')_r(|x|_2) g_r(\|A_r^{-1}(x-v_r)\|_K) dx$ requires the computation of the matrix $A_r$.
The reason why Theorem \ref{thm_isotropic_general} is direct, is the following.
\begin{theorem}
	\label{thm_ADerivative}
	Assume all the properties \ref{fg_first} to \ref{fg_last} are satisfied and the function $I_1$ restricted to $\sym_0 \times \R^n$ has a unique global minimum $(M_0, w_0)$, then $\left. \frac{\partial (A_r, v_r)}{\partial r} \right|_{r=1}$ exists and is equal to $-(M_0, w_0)$.

	In this case, if $(\bar A_r, \bar v_r)$ is any curve in $\symp \times \R^n$ of the form
	\[(\bar A_r, \bar v_r) = (\I,0) + (1-r) (M_0, w_0) + o(1-r),\]
	the measure
	\[\frac 1{1-r} \frac{(f')_r(|x|_2)}{|x|_2} g_r(\|\bar A_r^{-1}(x-\bar v_r)\|_K) dx\]
	converges weakly to $F'(\langle \xi, M_0 \xi + w_0 \rangle) d\sigma$.
	
	For example, this is true for the linear part $\bar A_r = \I + (1-r) M_0, \bar v_r = (1-r) w_0$.
\end{theorem}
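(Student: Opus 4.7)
The plan is to prove the two assertions in sequence, both reducing to a convergence of minimizers. For the derivative statement, I would show that the rescaled minimizers $(M_r, w_r)$ given by Theorem \ref{thm_min_existence} converge to $(M_0, w_0)$ as $r \to 1^-$; since $A_r = \I + (1-r)M_r$ and $v_r = (1-r) w_r$, this yields
\[\lim_{r \to 1^-} \frac{A_r - \I}{r-1} = -\lim_{r \to 1^-} M_r = -M_0,\]
and similarly $\lim_{r \to 1^-} v_r/(r-1) = -w_0$, establishing the one-sided derivative. To obtain this convergence I would reparametrize the curved slice $\Omega_r = ((\symp \cap \SL) - \I)/(1-r)$ by a local diffeomorphism $\Psi_r:\sym_0 \to \Omega_r$ that adjusts the trace to preserve the determinant constraint, with $\Psi_r \to \mathrm{id}$ uniformly on compact sets. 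Then $\tilde I_r := I_r \circ (\Psi_r \times \mathrm{id})$ is defined on the fixed space $\sym_0 \times \R^n$ and converges to $I_1$ uniformly on compact sets by Theorem \ref{thm_r1extension}. Since $I_1$ is convex with unique minimum on $\sym_0 \times \R^n$ it is coercive there; combined with $\tilde I_r(\tilde M_r, \tilde w_r) = I_r(M_r, w_r) \leq I_r(0, 0) \to I_1(0,0) < \infty$, this forces $(\tilde M_r, \tilde w_r)$ to stay bounded, so any cluster point minimizes $I_1$ on $\sym_0 \times \R^n$ by the standard convergence-of-minimizers argument and therefore equals $(M_0, w_0)$ by uniqueness, giving convergence of the whole family.

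For the second assertion, I would compute the weak limit of the measure via polar coordinates $x = s\xi$ with $dx = s^{n-1} ds\, d\sigma(\xi)$ followed by the affine rescaling $s = 1 + (1-r) t$. Since $(f')_r(s) = f'(t)$ is supported on $t \geq -1$ and $ds = (1-r) dt$, the density of $\frac{1}{1-r} \frac{(f')_r(|x|_2)}{|x|_2} g_r(\|\bar A_r^{-1}(x-\bar v_r)\|_K) dx$ becomes
\[f'(t)\, (1 + (1-r)t)^{n-2}\, g_r\!\left(\|\bar A_r^{-1}((1 + (1-r)t)\xi - \bar v_r)\|_K\right) dt\, d\sigma(\xi).\]
Using $\bar A_r^{-1} = \I - (1-r)M_0 + o(1-r)$ and $\bar v_r = (1-r)w_0 + o(1-r)$, a first-order expansion yields
\[\bar A_r^{-1}((1 + (1-r)t)\xi - \bar v_r) = \xi + (1-r)(t\xi - M_0 \xi - w_0) + o(1-r).\]
Because $K$ is in \Lowner position and $C^1$-smooth, $\partial \B$ supports $K$ at every $\xi \in \St$, so the outer unit normal to $\partial K$ at $\xi$ equals $\xi$, giving $\nabla \|\cdot\|_K(\xi) = \xi$ by Euler's identity. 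Hence
\[\|\bar A_r^{-1}((1 + (1-r)t)\xi - \bar v_r)\|_K = 1 + (1-r)(t - \langle \xi, M_0 \xi + w_0\rangle) + o(1-r),\]
so $g_r$ of this argument converges to $g(t - \langle \xi, M_0 \xi + w_0\rangle)$.

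Pairing the measure with $\phi \in C_b(\R^n)$ and applying dominated convergence --- legitimate because the $t$-support is confined to a bounded interval by \ref{g_0} and $f'$ is bounded there by convexity of $f$ --- gives
\[\int_\St \phi(\xi) \int_\R f'(t)\, g(t - \langle \xi, M_0 \xi + w_0\rangle)\, dt\, d\sigma(\xi) = \int_\St \phi(\xi)\, F'(\langle \xi, M_0 \xi + w_0\rangle)\, d\sigma(\xi),\]
the last equality using the substitution $u = t - \langle \xi, M_0 \xi + w_0\rangle$ together with $F'(x) = \int f'(x + u) g(u)\, du$, obtained by differentiating $F = f * \bar g$. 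The main obstacle is the uniform-in-$\xi$ control of the asymptotic expansion of $\|\cdot\|_K$: the $o(1-r)$ remainder must be uniform over $\xi \in \St$ in order to exchange limit and integral, and this is precisely where the $C^1$-smoothness hypothesis on $\partial K$ combined with compactness of $\St$ enters in an essential way.
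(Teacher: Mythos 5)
The second half of your argument (polar coordinates, substitution $s=1+(1-r)t$, first-order expansion of $\|\cdot\|_K$ using $n^K(\xi)=\xi$ on $\St$, dominated convergence, and the identification of $F'$ as $\int f'(\cdot+u)\,g(u)\,du$) tracks the paper's computation in the proof of Theorem~\ref{thm_r1extension} essentially verbatim, and your closing remark about uniform control of the $o(1-r)$ remainder is exactly the point the paper handles by making the $o$-terms uniform on compacts. That part is fine.

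The gap is in the boundedness step of the first half. You claim that since $\tilde I_r := I_r\circ(\Psi_r\times\mathrm{id})$ converges to $I_1$ uniformly on compacts, $I_1$ is coercive, and $\tilde I_r(\tilde M_r,\tilde w_r)\le I_r(0,0)\to I_1(0,0)<\infty$, ``this forces $(\tilde M_r,\tilde w_r)$ to stay bounded.'' This inference is not valid in general: uniform convergence on compact sets tells you nothing about $\tilde I_r$ far from the origin, so the minimizers could run off to infinity where $\tilde I_r$ dips below the coercivity barrier of $I_1$. The usual way to rule this out is convexity along the segment from $(0,0)$ to the minimizer, but your reparametrization $\Psi_r$ (which adjusts the trace nonlinearly to restore the determinant constraint) destroys convexity, so $\tilde I_r$ is not convex; nor is $I_r$ itself convex, since $I_r(M,w)=\det(\I+(1-r)M)\,L_r(\I+(1-r)M,(1-r)w)$, and it is $L_r$, not $I_r$, that Proposition~\ref{prop_Lposconvex} shows is convex. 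The paper's proof addresses exactly this: it works with $L_r$ on the convex set $\mathcal D\times\R^n$, takes $\lambda\in(0,1)$ so that $\lambda(M_r,w_r)\in\partial B_R$, notes that $(\I,0)+(1-r)\lambda(M_r,w_r)=\lambda(A_r,v_r)+(1-\lambda)(\I,0)\in\mathcal D$ by convexity of $\mathcal D$, controls $|\tr(\lambda M_r)|$ via the $\tr(M_r/\|M_r\|_F)\to 0$ estimate from Theorem~\ref{thm_min_existence}, and uses the identity $L_r(A,v)=\det(A)^{-1}I_r(M,w)$ together with a quantitative determinant bound to convert the lower bound $I_1\ge C+1$ near $\partial B_R$ into a statement $L_r(\lambda(A_r,v_r)+(1-\lambda)(\I,0))> \max\{L_r(\I,0),L_r(A_r,v_r)\}$, which contradicts convexity of $L_r$. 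Your proposal as written skips all of this, so the boundedness claim is unsupported; you would need to route the argument back through $L_r$ and its convexity rather than through the non-convex $\tilde I_r$.

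Once boundedness of $(M_r,w_r)$ is in hand, your ``convergence of minimizers'' step (any cluster point is a minimizer of $I_1$, hence equals $(M_0,w_0)$ by uniqueness, using the competitors $M^{(r)}$ to compare) coincides with the paper's, and the conclusion about the one-sided derivative $\left.\frac{\partial(A_r,v_r)}{\partial r}\right|_{r=1}=-(M_0,w_0)$ follows the same way.
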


The paper is organized as follows: In Section \ref{sec_preliminaries} we recall the basic notations and properties we need about Convex Geometry, in Section \ref{sec_basic_results} we prove some basic technical properties of the functions $L_r, I_r$, and in Section \ref{sec_mainthms} we prove the main theorems which are Theorems \ref{thm_isotropic_general}, \ref{thm_min_equivalences}, \ref{thm_min_existence}, \ref{thm_r1extension} and \ref{thm_ADerivative}.

\section{Notation and preliminary results}
\label{sec_preliminaries}
We refer to the books \cite{artstein2015asymptotic, schneider2014convex} for basic facts from convexity.
Recall that a convex body is a compact convex set with non-empty interior.

Denote by $|x|_2$ the euclidean norm of $x \in \R^n$ and by $\|x\|_K$ the gauge function of $K$ defined by
\[\|x\|_K = \inf\left\{\lambda > 0 / x \in \lambda K \right\}.\]
The function $\|\cdot\|_K$ is well defined for every $x$ when $0$ is in the interior of $K$, and is a norm when $K$ is a symmetric convex body (this is $K = -K$).
Observe that $\|A^{-1} x\|_K = \|x\|_{A K}$ and that $\|A^{-1}(x-v)\|_K \leq 1$ if and only if $x \in A K + v$, for any $A \in \GL$ and $v \in \R^n$.

We say that $K$ is $C^k$ or that it has a $C^k$-smooth boundary if $\|x\|_K$ is a $C^k$ function in $\R^n \setminus \{0\}$.
If $K$ is $C^1$, for every $x \in \partial K$ there is a well-defined unit vector $n^K(x)$ orthogonal to $\partial K$ at $x$, pointing outwards $K$.
We extend the definition of $n^K$ to $\R^n \setminus \{0\}$ by homogeneity, this is, $n^K(\lambda x) = n^K(x)$ for every $\lambda > 0$.
Then the gradient of $\|\cdot\|_K$ at any point $x \in \R^n$ can be computed as
\begin{equation}
	\label{eq_normgradient}
	\nabla \|x\|_K = \frac{ \|x\|_K}{\langle n^K(x), x \rangle} n^K(x)
\end{equation}
(see for example \cite[Section 1.7.2, eq. (1.39)]{schneider2014convex}).

For $A \in \M$ the operator norm and the Frobenius norm are defined by
\[\|A\|_{op} = \sup_{|v|_2=1} |A v|_2,\ \  \|A\|_F = \sqrt{\tr(A^T A)},\]
respectively.
For for $(A,v) \in \M \times \R^n$, we use $\|(A,v)\| = \sqrt{\|A\|_F^2 + |v|^2}$ which is the norm induced by the inner product \eqref{eq_innerproduct}.

\begin{definition}
	We say that a function $f: U \subseteq \R^d \to \R$ is coercive if $\lim_{|x|_2 \to \infty} f(x) = + \infty$.
	A one-parameter family of functions $f_r: U \subseteq \R^d \to \R$ is said to be coercive uniformly in $r$ if $f_r(x) \geq f(x)$ for every $|x| \geq C$ for some $C > 0$ and some coercive function $f$.

	For a function $f:\R \to \R$ we say that $f$ is coercive to the right if \\ $\lim_{x \to +\infty} f(x) = + \infty$.
\end{definition}
	Observe that a coercive and convex function defined in a convex set must have points of minimum, and that if the convexity of the function is strict the minimum is unique.

We consider the set $\mathcal D = \{M \in \symp/ \det(M) \geq 1\}$.
The inequality
\[\det(\lambda A + (1-\lambda) B)^{1/n} \geq \lambda \det(A)^{1/n} + (1-\lambda) \det(B)^{1/n}\]
valid for all $A,B \in \symp$ (see for example \cite[Lemma 2.1.5,~p.51]{artstein2015asymptotic}) shows that $\mathcal D \subseteq \sym_+$ is a convex set.

Throughout this paper we assume that $K \subseteq \R^n$ is a fixed convex body in \Lowner position.
A basic fact that we shall use often is that $0$ is in the interior of $K$. If this where not true, there would exist $v \in \S$ such that $\langle v, x \rangle \geq 0$ for every $x \in K$.
Taking inner product against $v$ in the second equality of \eqref{eq_contactpoints}, we obtain
\[\sum c_i \langle v, u_i \rangle = 0\]
which implies $c_i = 0$ for all $i$.

For a convex body $L$, the $i$-th curvature measures are defined by means of local parallel sets (cf. \cite[Section 4 and formula (4.10)]{schneider2014convex}).
The $0$-th curvature measure $C_0(L, \cdot)$ can be viewed as a non-smooth extension of the Gauss-Kronecker curvature since by \cite[(4.25)]{schneider2014convex}, if $L$ is $C^2$-smooth, we have
\[C_0(L, \beta) = \int_{\partial L \cap \beta} \kappa(x) d S(x).\]
We shall need the following property.
\begin{prop}{\cite[Theorem 4.5.1]{schneider2014convex}}
\label{thm_Csupport}

	The measure $C_0(\co(\St), \cdot)$ is supported in $\St$.
\end{prop}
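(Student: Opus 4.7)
The plan is first to establish that the rescaled minimizers $(M_r, w_r) = \bigl(\tfrac{A_r - \I}{1-r}, \tfrac{v_r}{1-r}\bigr)$ from Theorem \ref{thm_min_existence} converge to $(M_0, w_0)$ as $r \to 1^-$, which is equivalent to the derivative statement $\left.\frac{\partial (A_r, v_r)}{\partial r}\right|_{r=1} = -(M_0, w_0)$; then to use this expansion together with the $C^1$-smoothness of $\partial K$ to pass to the weak limit of the associated measure for any curve $(\bar A_r, \bar v_r)$ with the prescribed first-order behaviour.

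For the convergence $(M_r, w_r) \to (M_0, w_0)$, I would run a $\Gamma$-convergence-type argument using three ingredients: (i) $(M_r, w_r)$ is the unique minimum of $I_r$ on the constraint manifold $\mathcal{C}_r := \{(M,w) \in \sym \times \R^n : \det(\I + (1-r)M) = 1\}$, by Theorem \ref{thm_min_existence}; (ii) $(M_0, w_0)$ is the unique minimum of $I_1$ on the linear subspace $\sym_0 \times \R^n$, by hypothesis; (iii) $I_r \to I_1$ uniformly on compacta, by Theorem \ref{thm_r1extension}. First I would prove that $I_1$ is coercive on $\sym_0 \times \R^n$: by (1) $\Leftrightarrow$ (2) of Theorem \ref{thm_min_equivalences} applied with $\nu = \sigma$, every nonzero $(M, w) \in \sym_0 \times \R^n$ has $\sigma\{\xi \in \St : \langle \xi, M\xi + w\rangle > 0\} > 0$; combined with $F'(0) > 0$, strict convexity of $F$ on $[0, \infty)$, and a compactness argument on the unit sphere in $\sym_0 \times \R^n$, this yields $I_1(tM, tw) \to +\infty$ uniformly in direction as $t \to +\infty$. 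Using $\tr(M_0) = 0$ and $\det(\I + tX) = 1 + t\tr(X) + O(t^2)$, I would correct $(M_0, w_0)$ to $(M_0^{(r)}, w_0) \in \mathcal{C}_r$ with $M_0^{(r)} = M_0 + \alpha_r \I$ and $\alpha_r = O(1-r)$. Then $I_r(M_r, w_r) \leq I_r(M_0^{(r)}, w_0) \to I_1(M_0, w_0)$, which together with coercivity and uniform convergence gives boundedness of $\{(M_r, w_r)\}$; any accumulation point $(M^\ast, w^\ast)$ satisfies $\tr(M^\ast) = 0$ (again by the determinant expansion, now with $M_r$ bounded) and $I_1(M^\ast, w^\ast) \leq I_1(M_0, w_0)$ by lower semicontinuity, forcing $(M^\ast, w^\ast) = (M_0, w_0)$ by uniqueness and thus convergence of the whole family.

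For the weak convergence of the measure against $\phi \in C_c(\R^n)$, I would pass to spherical coordinates $x = s\omega$, $\omega \in \S$, $s > 0$, and then substitute $s = 1 + (1-r)t$. The density against $dt\,d\omega$ becomes
\[
f'(t)\,(1+(1-r)t)^{n-2}\,g_r\bigl(\|\bar A_r^{-1}((1+(1-r)t)\omega - \bar v_r)\|_K\bigr)\,\phi\bigl((1+(1-r)t)\omega\bigr).
\]
The hypothesis $\bar A_r = \I + (1-r)M_0 + o(1-r)$, $\bar v_r = (1-r)w_0 + o(1-r)$ yields
\[
\bar A_r^{-1}((1+(1-r)t)\omega - \bar v_r) = \omega + (1-r)(t\omega - M_0 \omega - w_0) + o(1-r).
\]
Since $K \subseteq \B$ is in \Lowner position, for $\omega \in \St$ the sphere $\S$ supports $\partial K$ tangentially at $\omega$, so $n^K(\omega) = \omega$ and by \eqref{eq_normgradient} $\nabla \|\omega\|_K = \omega$; hence $\bigl(\|\bar A_r^{-1}((1+(1-r)t)\omega - \bar v_r)\|_K - 1\bigr)/(1-r) \to t - \langle \omega, M_0 \omega + w_0\rangle$ and $g_r$ converges accordingly. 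For $\omega \in \S \setminus \St$ we have $\|\omega\|_K > 1$, so this ratio tends to $+\infty$ uniformly in bounded $t$ and $g_r \to 0$ there. Dominated convergence applies because the integrand vanishes for $t < -1$ (where $f' = 0$) and for $t > 1 + \langle\omega, M_0\omega+w_0\rangle + o(1)$ (where $g_r = 0$), giving a bounded $t$-support uniformly in $r$ near $1$, while $f'$ is bounded on any compact interval by local Lipschitzness of $f$. The limit is
\[
\int_\St \phi(\omega)\,\Bigl(\int_\R f'(t)\,g(t - \langle\omega, M_0 \omega + w_0\rangle)\, dt\Bigr)\, d\sigma(\omega) = \int_\St \phi(\omega)\,F'(\langle\omega, M_0 \omega + w_0\rangle)\, d\sigma(\omega),
\]
where the inner integral equals $F'(\langle\omega, M_0\omega + w_0\rangle)$ because $F = f * \bar g$ implies $F' = f' * \bar g$.

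The hardest step will be the boundedness of $(M_r, w_r)$: one must carefully combine the degeneration of the constraint manifolds $\mathcal{C}_r$ toward $\sym_0 \times \R^n$ with the uniform convergence $I_r \to I_1$ on compacta to rule out escape of minimizers to infinity, since coercivity of $I_1$ on $\sym_0 \times \R^n$ does not automatically translate to uniform coercivity of $I_r$ on the curved sets $\mathcal{C}_r$. A secondary technical point is justifying dominated convergence in the spherical integral, which hinges on the compact $t$-support imposed by $g_r$ together with the $C^1$-smoothness of $\partial K$.
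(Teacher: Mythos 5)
Your proof addresses a completely different statement than the one assigned. The statement in question is Proposition~\ref{thm_Csupport}: that the $0$-th curvature measure $C_0(\co(\St), \cdot)$ is supported in $\St$. This is a purely measure-theoretic fact about curvature measures of convex bodies, cited directly from Schneider's book (Theorem~4.5.1 there), and the paper gives no proof of it at all --- it is invoked as a known result. Nothing about minimizers $(M_r, w_r)$, the functionals $I_r$ and $I_1$, $\Gamma$-convergence, spherical coordinates, or weak limits of densities enters into it.

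What you have actually written is a proof sketch for Theorem~\ref{thm_ADerivative}: the convergence of the rescaled minimizers to $(M_0, w_0)$ and the weak convergence of the measure $\frac{1}{1-r}\frac{(f')_r(|x|_2)}{|x|_2} g_r(\|\bar A_r^{-1}(x - \bar v_r)\|_K)\,dx$ to $F'(\langle\xi, M_0\xi + w_0\rangle)\,d\sigma$. As an attempt at that theorem your sketch is broadly reasonable and parallels the paper's argument (the paper also uses uniform convergence $I_r \to I_1$ on compacta, corrects $(M_0,w_0)$ onto the constraint manifold via $M^{(r)}$, and passes to spherical coordinates with the substitution $s = 1 + (1-r)t$), though the paper establishes boundedness of $(M_r,w_r)$ via a convexity argument on $L_r$ rather than via coercivity of $I_1$ alone. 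But you should verify which statement you were asked to prove: for Proposition~\ref{thm_Csupport} the correct response is simply to observe that curvature measures $C_i(L,\cdot)$ of a convex body $L$ are by definition (or by Schneider's Theorem~4.5.1) concentrated on $\partial L$, and in fact on the set of boundary points; one then checks that the $0$-th curvature measure of a polytope-like or general convex body $\co(\St)$ assigns no mass to boundary points outside $\St$ itself because those points lie in the relative interior of faces where the Gauss map has degenerate image, giving zero $0$-th curvature measure.
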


\section{Basic Results}
\label{sec_basic_results}

This section is devoted to establish basic properties of the functionals $I_r, L_r$.
For convenience, in each proposition we detail the properties of $f,g$ that are necessary.

The following fact is an easy consequence of Rademacher's Theorem and the Dominated Convergence Theorem.
The proof is straightforward and will be omitted.
\begin{prop}
	Assume \ref{f_smooth}, \ref{g_smooth}, \ref{g_0} are satisfied, then $L_r, I_1$ are $C^1$ for $r \in (1/2, 1)$.
\end{prop}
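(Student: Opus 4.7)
The plan is the standard ``differentiate under the integral'' argument, powered by Rademacher's theorem (which, via \ref{f_smooth} and \ref{g_smooth}, makes $f$ and $g$ differentiable almost everywhere) and the Dominated Convergence Theorem. The crucial enabling feature is \ref{g_0}: since $g_r(\|x\|_K) = 0$ whenever $\|x\|_K \geq 2-r$, the integrand defining $L_r(A,v)$ is supported in the fixed compact set $(2-r)K$, independently of $(A,v)$, supplying a uniform integrable dominant throughout.

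I would proceed in three steps. \emph{Step 1 (continuity of $L_r$).} The integrand is jointly continuous in $(A,v,x)$ by continuity of $f,g$, and dominated by a constant multiple of the indicator of $(2-r)K$; DCT then delivers $L_r \in C^0$. \emph{Step 2 (existence of partial derivatives).} Rademacher gives that $f_r$ and $g_r$ are differentiable almost everywhere; for $(A,v)$ with $A$ invertible, a Fubini/coarea argument shows that the null exceptional set of $f_r$ on $\R$ pulls back under the smooth submersion $x\mapsto|Ax+v|_2$ to a null set in $\R^n$, so the chain rule applies to the integrand for almost every $x$. The resulting pointwise derivative is bounded by the local Lipschitz constants of $f,g$ times an integrable factor, and DCT on difference quotients yields
\[
\frac{\partial L_r}{\partial A_{ij}}(A,v) = \frac{1}{1-r}\int f_r'(|Ax+v|_2)\,\frac{(Ax+v)_i\,x_j}{|Ax+v|_2}\,g_r(\|x\|_K)\,dx,
\]
and analogously for $\partial/\partial v_i$. \emph{Step 3 (continuity of the gradient).} A second DCT application, using the same uniform dominant and pointwise almost-everywhere convergence of the integrand as $(A_n,v_n)\to(A_0,v_0)$, yields $L_r \in C^1$.

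The same recipe handles $I_1$ (resp.\ $I_r$): its integrand is supported in a compact set that varies continuously with $(M,w)$, so the compactness argument of Step 1 goes through on each compact $(M,w)$-neighborhood, and the rest proceeds as for $L_r$. The main obstacle I foresee is the last continuity step, since strictly speaking $f_r'$ is only defined almost everywhere and is generally not continuous where defined; securing the a.e.\ pointwise convergence of $f_r'(|A_n x + v_n|_2)$ as $(A_n,v_n)\to(A_0,v_0)$ requires choosing an appropriate measurable representative of $f_r'$ (for instance the Lebesgue-point derivative of $f$) and using the smoothness of the argument in $(A,v)$ to keep the Lebesgue-null exceptional set null under perturbation. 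This is the single genuinely delicate point in an otherwise routine argument, and is presumably why the authors call the proof ``straightforward.''
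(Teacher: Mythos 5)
Your Steps 1 and 2 (continuity of $L_r$ and existence of the partial derivatives with the stated formula) are sound, and your instinct to lean on Rademacher, the compact support coming from \ref{g_0}, and DCT on difference quotients matches the paper's one-line description of its (omitted) argument. You have also correctly located the one genuinely delicate point: continuity of the integral when the integrand involves an a.e.-defined derivative $f_r'$.

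However, the fix you propose for that delicate point does not work and the gap is real. You suggest securing \emph{a.e.\ pointwise} convergence of $f_r'(|A_n x + v_n|_2)$ to $f_r'(|A_0 x + v_0|_2)$ by choosing a measurable representative (Lebesgue-point derivative) and tracking the null exceptional set under the $C^1$ change of variables. This keeps the exceptional set null, which is enough for the derivative formula to make sense for each fixed $(A,v)$, but it does \emph{not} deliver pointwise convergence: a Lipschitz $f$ can have $f'$ whose set of essential discontinuities has positive one-dimensional Lebesgue measure (Volterra-type constructions), and no choice of representative removes those discontinuities. For such $f$, $f_r'(|A_n x + v_n|_2)$ genuinely fails to converge pointwise a.e.\ to $f_r'(|A_0 x + v_0|_2)$, so DCT cannot be applied to the difference as stated.

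What actually closes the gap is \emph{$L^1$ convergence}, not pointwise convergence. Change variables $y = A x + v$ and pass to polar coordinates; the map $(A,v)\mapsto \int f_r'(|Ax+v|_2)\,\psi(A,v,x)\,dx$ (with $\psi$ continuous and bounded, supported in $(2-r)K$) is then controlled by $\int_0^R |f_r'(s+\eta(A,v)\cdot)-f_r'(s)|\,s^{n-1}\,ds$ type quantities, which go to $0$ by continuity of translation in $L^1_{\mathrm{loc}}$ since $f_r' \in L^\infty_{\mathrm{loc}} \subset L^1_{\mathrm{loc}}$. Equivalently, mollify: set $f^\varepsilon = f*\rho_\varepsilon$, note $L_r^\varepsilon$ is $C^1$ with the obvious gradient, and check via the same change of variables that $\nabla L_r^\varepsilon \to \nabla L_r$ uniformly on compacts because $(f^\varepsilon)' \to f'$ in $L^1_{\mathrm{loc}}$; then $L_r$ is $C^1$ as a uniform $C^1$-limit. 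Either route replaces your pointwise step. (The paper itself omits the proof, calling it ``an easy consequence of Rademacher and DCT,'' so there is nothing explicit to compare against; but the DCT-on-pointwise-limits version you wrote needs the $L^1$-continuity upgrade to be correct.)
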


\begin{prop}
	\label{prop_Lcoercive}
	Assume \ref{f_convex}, \ref{f_zeroleft}, \ref{f_inc}, \ref{g_1}, then the family of functionals $L_r$ restricted to $\mathcal D \times \R^n$ is coercive, uniformly for $r \in (1/2,1)$.
\end{prop}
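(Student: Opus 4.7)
My plan is to bound $L_r$ below by a coercive function of $(A,v)$ that is \emph{independent} of $r$, and then verify coercivity of that function via two complementary estimates.

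First, using \ref{f_convex}, \ref{f_zeroleft}, \ref{f_inc}, I would note that $f \geq 0$, $f(-1) = 0$, and $a_0 := f(0) > 0$. Convexity together with the monotonicity of secant slopes gives $f(s) \geq a_0 s_+$ for all $s \in \R$, hence $f_r(t) \geq a_0(t-1)_+/(1-r)$. By \ref{g_1} (together with the non-negativity of $g$ available from the standing list of properties) we obtain $g_r(\|x\|_K) \geq 1_{rK}(x) \geq 1_{(1/2)K}(x)$ for $r \geq 1/2$. Combining these inequalities and using $(1-r)^{-2} > 4$ on $(1/2, 1)$,
\[
L_r(A, v) \;\geq\; 4 a_0 \int_{(1/2)K} (|A x + v|_2 - 1)_+ \, dx \;=:\; 4 a_0 \, \psi(A, v),
\]
a lower bound free of $r$. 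It now suffices to show that $\psi$ is coercive on $\mathcal{D} \times \R^n$.

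Since $K$ is in \Lowner position, $0 \in \operatorname{int}(K)$, so $K \supseteq \rho_0 \B$ for some $\rho_0 > 0$; hence $\psi(A, v) \geq \tilde\psi(A, v) := \int_{(\rho_0/2) \B} (|Ax + v|_2 - 1)_+ \, dx$, and I would work with $\tilde\psi$. I then plan to establish two lower bounds for $\tilde\psi$. The first, controlling large $|v|$, comes from Jensen's inequality applied to the convex function $(\cdot - 1)_+$ and to $x \mapsto \langle Ax+v, v/|v|\rangle \leq |Ax+v|_2$, using that $(\rho_0/2)\B$ has centroid $0$:
\[
\tilde \psi(A, v) \;\geq\; \omega_n (\rho_0/2)^n (|v| - 1)_+ .
\]
The second, controlling large $\|A\|_F$, exploits the spectral decomposition of $A \in \symp$: pick a unit eigenvector $u^\ast$ associated with $\mu := \|A\|_{op}$ and use $|Ax+v|_2 \geq |\mu \langle x, u^\ast\rangle + \langle v, u^\ast\rangle|$. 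Slicing $(\rho_0/2)\B$ along hyperplanes perpendicular to $u^\ast$, Fubini reduces the estimate to a one-dimensional integral which yields
\[
\tilde \psi(A, v) \;\geq\; c_n \bigl(\mu \rho_0/4 - |v| - 1\bigr)\quad\text{whenever } \mu \geq 4(|v|+1)/\rho_0,
\]
for a positive constant $c_n$ depending only on $n$ and $\rho_0$.

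Finally, I would close with a dichotomy. Since $\mu \geq \|A\|_F/\sqrt{n}$ for $A \in \symp$, $\|(A, v)\| \to \infty$ within $\mathcal D \times \R^n$ forces $\mu + |v| \to \infty$. If $|v| \geq \mu \rho_0/8$ then $|v| \to \infty$, and the Jensen bound gives $\tilde\psi \to \infty$. Otherwise $|v| < \mu \rho_0/8$, so $\mu \to \infty$; for $\mu$ large the hypothesis of the slicing bound is automatically satisfied and it yields $\tilde\psi \geq c_n(\mu \rho_0/8 - 1) \to \infty$. The step I expect to be the main technical nuisance is the spectral-slicing estimate, where one must carefully integrate over hyperplane slices of the ball $(\rho_0/2)\B$ and extract the linear growth rate in $\mu$ after passing through the positive-part function, shift $\langle v, u^\ast\rangle$ in place.
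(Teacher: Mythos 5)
Your argument is correct, and it reaches the conclusion along a genuinely different route from the paper's proof. The paper's proof also uses the eigenvector trick (picking $w$ proportional to the eigenvector of $A$ realizing $\|A\|_{op}$, with $\langle v, w\rangle \geq 0$), but then restricts attention to the half-space $S = \{x : \langle x, v+w \rangle \geq |v+w|^2\}$, computes the image volume via $\det(A)$ and the invariance $\vol((AB+v)\cap S) = \det(A)\vol(B\cap \bar S)$, and arrives in one stroke at
\[
L_r(A,v) \geq C_n\, f\!\left(\sqrt{|v|_2^2 + \tfrac 14 \|A\|_{op}^2} - 1\right),
\]
which is coercive because the convex increasing $f$ is coercive to the right. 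This handles the $A$-growth and $v$-growth simultaneously, and the $r$-dependence disappears directly through the substitution $f_r(t)\geq f(t-1)$. Your proof instead removes the $r$-dependence at the outset by linearizing $f$ from below ($f(s)\geq a_0 s_+$ from convexity and $f(-1)=0$, $f(0)>0$), reducing the problem to coercivity of the fixed functional $\psi(A,v) = \int_{(1/2)K}(|Ax+v|_2-1)_+\,dx$, and then treats translation and dilation separately: Jensen over the centered ball gives a linear lower bound in $|v|$, a spectral slicing argument gives a linear lower bound in $\|A\|_{op}$ (minus $|v|$), and a dichotomy on whether $|v|$ dominates $\|A\|_{op}$ glues the two. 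Both routes are sound. The paper's argument is more compact and avoids the case split, while yours is more elementary and illustrates that only the linear growth of $f$ at the origin is actually used --- the coercivity of $f$ to the right plays no role in your version. One small remark: the lower bound $g_r(\|x\|_K) \geq 1_{(1/2)K}(x)$ implicitly uses $g \geq 0$ on all of $\R$, which follows from \ref{g_decpos} and \ref{g_0} rather than from \ref{g_1} alone; the paper's own proof has the same implicit dependence, so this is not a defect of your argument but worth noting.
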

\begin{proof}
	Take any $(A,v) \in \mathcal D \times \R^n$.
	Let $B \subseteq \frac 12 K$ be a centered ball.
	Let $w$ be an eigenvector of $A$ of eigenvalue $\|A\|_{op}$, with euclidean norm $\frac 12 \|A\|_{op}$ and chosen so that $\langle v, w \rangle \geq 0$.
	Consider the half-space $S = \{x \in \R^n / \langle x, v+w \rangle \geq \langle v+w, v+w \rangle \}$ where clearly $v+w \in \partial S$.
	Applying the inverse of the affine transformation,
	\[\vol((AB+v) \cap S) = \det(A) \vol(B \cap \bar{S})\]
	where $\bar S = A^{-1}(S - v)$ is a half-space with $\frac 12 \frac{w}{|w|_2} \in \partial \bar S$.
	Since the volume of the intersection of $B$ with a half-space (not containing the origin) is a decreasing function of the distance of this half-space to the origin, and since this distance is at least $1/2$, we have $\vol(B \cap \bar{S}) \geq C_n$ where $C_n > 0$ is a dimensional constant.
	Also, $|x|_2 \geq |v+w|_2 \geq \sqrt{|v|_2^2 + |w|_2^2}$ for every $x \in S$.
	Using \ref{g_1} and that $f$ is non-negative and non-decreasing,
	\begin{align}
		L_r(A,v)
		&\geq \frac1{1-r} \int_B f_r(|Ax+v|_2) dx\\
		&\geq 2 \det(A)^{-1} \int_{(AB+v) \cap S} f_r(|x|_2) dx\\
		&\geq 2 \det(A)^{-1} \vol((AB+v) \cap S) f_r\left(\sqrt{|v|_2^2 + \frac 14 \|A\|_{op}^2}\right)\\
		&\geq 2 C_n f\left(\frac{\sqrt{|v|_2^2 + \frac 14 \|A\|_{op}^2}-1}{1-r}\right).\\
	\end{align}
	For $\sqrt{|v|_2^2 + \frac 14 \|A\|_{op}^2} \geq 1$ we obtain
	\[
		L_r(A,v) \geq C_n f\left(\sqrt{|v|_2^2 + \frac 14 \|A\|_{op}^2}-1\right)\\
	\]
	and $f$ is coercive to the right, consequence of \ref{f_convex} and \ref{f_inc}.
\end{proof}

\begin{prop}
	\label{prop_Lposconvex}
	Let $r \in (1/2, 1)$ and assume \ref{g_1}, \ref{g_pos}, \ref{f_convex}, \ref{f_zeroleft}, \ref{f_inc}.
	The function $L_r$ restricted to $\mathcal D \times \R^n$ is positive and convex.
\end{prop}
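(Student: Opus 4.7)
The plan is to treat convexity and positivity by entirely separate arguments; convexity is a clean composition-of-convex-functions computation, while positivity is where the \Lowner-position hypothesis enters.

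For convexity, I would first note that for each fixed $x$ the map $(A,v) \mapsto |Ax+v|_2$ is the Euclidean norm of an affine function of $(A,v)$, so it is convex. By \ref{f_convex}, \ref{f_zeroleft}, \ref{f_inc}, the function $f$ is convex and non-decreasing on all of $\R$ (it vanishes on $(-\infty,-1]$ and is convex strictly increasing on $[-1,\infty)$), and the same holds for $f_r$. Composing a convex non-decreasing function with a convex function preserves convexity, so $(A,v) \mapsto f_r(|Ax+v|_2)$ is convex. Since $g_r(\|x\|_K) \geq 0$ by \ref{g_1}, \ref{g_pos}, \ref{g_0}, the integrand is a non-negative multiple of a convex function of $(A,v)$ for each $x$, and integration preserves convexity. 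This already gives convexity of $L_r$ on all of $\M \times \R^n$; restriction to the convex set $\mathcal D \times \R^n$ is automatic.

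For positivity I would argue by contradiction. Suppose $L_r(A_0, v_0) = 0$ for some $(A_0,v_0) \in \mathcal D \times \R^n$. Then for a.e.\ $x$, either $f_r(|A_0 x + v_0|_2) = 0$ or $g_r(\|x\|_K) = 0$. The open set $U = \{x : \|x\|_K < 2-r\}$ is the interior of $(2-r)K$, hence has positive Lebesgue measure, and $g_r(\|x\|_K) > 0$ on $U$ by \ref{g_1}, \ref{g_pos}. So for a.e.\ $x \in U$ one has $f_r(|A_0 x + v_0|_2) = 0$, which by \ref{f_zeroleft}, \ref{f_inc} forces $|A_0 x + v_0|_2 \leq r$. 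By continuity this inequality extends to all of $(2-r)K$, yielding $K \subseteq A_0^{-1}(\tfrac{r}{2-r}\B - \tfrac{v_0}{2-r})$, an ellipsoid of volume $\det(A_0)^{-1} (\tfrac{r}{2-r})^n \vol(\B)$. Since $\det(A_0) \geq 1$ and $r/(2-r) < 1$ for $r \in (1/2,1)$, this volume is strictly smaller than $\vol(\B)$, contradicting the \Lowner position of $K$.

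The only step with genuine content is the final volume comparison. Convexity is pure bookkeeping of the convexity-monotonicity hypotheses on $f$ and the non-negativity of $g$ and uses no geometry of $K$; positivity is where everything conspires: the positivity of $g_r$ on the interior of $(2-r)K$, the support condition on $f_r$, the lower bound $\det(A_0) \geq 1$ defining $\mathcal D$, and the \Lowner-minimality of $\vol(\B)$ among enclosing ellipsoids. I do not anticipate technical obstacles beyond making sure that the almost-everywhere inclusion can be promoted to a genuine containment before taking volumes, which is a standard continuity/open-set argument.
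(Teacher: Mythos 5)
Your proof is correct and follows essentially the same route as the paper: positivity via the containment $(2-r)A_0K+v_0\subseteq r\B$ and a volume comparison against the Löwner ellipsoid, and convexity from the convexity of $(A,v)\mapsto|Ax+v|_2$ composed with the convex non-decreasing $f_r$ and weighted by the non-negative $g_r$. The only cosmetic difference is that you spell out the almost-everywhere-to-everywhere step for the inclusion, which the paper leaves implicit.
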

\begin{proof}
	{\bf Positive:}

	Take $(A,v) \in \mathcal D \times \R^n$ and assume $L_r(A,v)=0$.
	Since the functions $f_r, g_r$ are non-negative we have
	\[g_r(\|x\|_K) > 0 \Rightarrow f_r(|A x+v|_2) = 0\]
	meaning
	\[x \in (2-r) K \Rightarrow A x+v \in r \B,\]
	hence,
	\[(2-r) A.K+v\subseteq r \B.\]

	Since $K$ is in \Lowner position,
	\[\det\left(\frac{2-r}{r} A \right) \leq 1\]
	\[\det(A) \leq \left(\frac{r}{2-r}\right)^n < 1\]
	which is absurd.
		
	{\bf Convex:}
	Let $(A,v), (B,w) \in \mathcal D \times \R^n$, and $\lambda \in [0,1]$.
	By \ref{f_zeroleft}, \ref{f_inc}, $f$ is non-decreasing.
	Using this fact, the convexity of the Euclidean norm and \ref{f_convex},
	\begin{align}
		\label{eq_Lposconvex_convex_ineq}
		L_r(\lambda A & + (1-\lambda)B, \lambda v+(1-\lambda)w)\\
		&= \frac 1{1-r} \int_{\R^n} f_r(|(\lambda A+(1-\lambda)B)x+(\lambda v+(1-\lambda)w)|_2) g_r(\|x\|_K) dx\\
		&= \frac 1{1-r} \int_{\R^n} f_r(|\lambda (Ax+v)+(1-\lambda)(Bx+w)|_2) g_r(\|x\|_K) dx\\
		&\leq \frac 1{1-r} \int_{\R^n} f_r(\lambda |Ax+v|_2+(1-\lambda)|Bx+w|_2) g_r(\|x\|_K) dx \label{ineq1}\\
		&\leq \frac 1{1-r} \int_{\R^n} (\lambda f_r(|Ax+v|_2)+(1-\lambda)f_r(|Bx+w|_2)) g_r(\|x\|_K) dx \label{ineq2}\\
		&= \lambda L_r(A,v) + (1-\lambda) L_r(B,w).
	\end{align}
\end{proof}
\begin{prop}
	\label{prop_LIdbounded}
	Assume \ref{g_0}, \ref{f_zeroleft}, then for $r \in (1/2, 1)$ we have $L_r(\I, 0) \leq C$ where $C$ is a constant depending only on $f$.
\end{prop}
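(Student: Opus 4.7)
The plan is to exploit the support restrictions imposed by \ref{f_zeroleft} and \ref{g_0} to localize the integrand of $L_r(\I,0)$ to a thin shell around $\partial \B$ whose volume is of order $1-r$, and then to bound the integrand on that shell by a constant that depends only on $f$.

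First I would identify the support of the integrand in
\[ L_r(\I,0) = \frac{1}{1-r} \int_{\R^n} f_r(|x|_2)\, g_r(\|x\|_K)\, dx. \]
By \ref{f_zeroleft}, $f_r(|x|_2) = 0$ whenever $|x|_2 \leq r$, since then $(|x|_2-1)/(1-r) \leq -1$. By \ref{g_0}, $g_r(\|x\|_K) = 0$ whenever $\|x\|_K \geq 2-r$. Because $K$ is in \Lowner position we have $K \subseteq \B$, hence $|x|_2 \leq \|x\|_K$ for every $x \in \R^n$; combining this with the two support conditions shows that the integrand vanishes outside the annulus $(2-r)\B \setminus r\B$.

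Next I would bound the integrand on this annulus. On it, the argument of $f$ satisfies $(|x|_2-1)/(1-r) \in [-1,1]$, so
\[ f_r(|x|_2) \leq M_f := \sup_{t \in [-1,1]} f(t), \]
a quantity that depends only on $f$ (and is finite under the standing hypotheses on $f$). Using the standing hypotheses on $g$ (namely \ref{g_1} together with monotonicity \ref{g_decpos}) one also has $g_r \leq 1$ pointwise, so the integrand is bounded above by $M_f$ throughout the annulus. A direct computation then gives
\[ \vol\bigl((2-r)\B \setminus r\B\bigr) = \omega_n\bigl((2-r)^n - r^n\bigr) \leq 2n\,\omega_n\,(3/2)^{n-1}(1-r) \]
for every $r \in (1/2,1)$, by the mean value theorem applied to $t \mapsto t^n$ on $[r,2-r]$.

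Assembling the pieces yields
\[ L_r(\I,0) \leq \frac{M_f}{1-r}\,\vol\bigl((2-r)\B \setminus r\B\bigr) \leq 2n\,\omega_n\,(3/2)^{n-1} M_f, \]
which is a constant depending only on $f$ (the ambient dimension being fixed throughout the paper). There is no genuine obstacle: the whole point is the cancellation between the prefactor $1/(1-r)$ and the fact that the annulus $(2-r)\B \setminus r\B$ has thickness and hence volume of order $1-r$.
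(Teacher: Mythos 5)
Your proof is correct and takes essentially the same approach as the paper: localize the integrand to a shell of thickness $O(1-r)$ around $\S$ and exploit the cancellation with the $1/(1-r)$ prefactor. The only difference is in the final bookkeeping — the paper keeps $f_r$ and integrates it in polar coordinates via the substitution $s=1+(1-r)t$, arriving at $\int_\S\int_{-1}^1 f(t)\,dt\,d\xi$, whereas you bound $f_r$ by $\sup_{[-1,1]}f$ and estimate the volume of the annulus directly; both routes use the same ingredients ($g_r\le 1$, the vanishing of $f$ on $(-\infty,-1]$, $K\subseteq\B$) and yield a constant depending only on $f$ and the dimension.
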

\begin{proof}
	Use \ref{g_0}, polar coordinates, the substitution $s = 1+(1-r)t$ and \ref{f_zeroleft} to obtain
	\begin{align}
		L_r(\I,0)
		&\leq \frac 1{1-r} \int_{(2-r) \B} f_r(|x|_2) dx\\
		&\leq \frac 1{1-r} \int_\S \int_0^{2-r} f_r(s) ds d\xi\\
		&= \int_\S \int_{-\infty}^1 f(t) dt d\xi\\
		&= \int_\S \int_{-1}^1 f(t) dt d\xi
		\leq C
	\end{align}
	(notice that $f_r(s) = f(t)$).
\end{proof}

To prove that $L_r$ has at most one minimum we need a lemma.

\begin{lemma}
	\label{lem_affine_multiplies}
	Let $n\geq 2$, $A,B \in \GL$ and $v,w \in \R^n$ be such that $Ax+v$ is a multiple of $Bx+w$ for every $x$ in an open set $U \subseteq \R^n$.
	Then there exists $a \neq 0$ for which $A = aB$ and $v = a w$.
\end{lemma}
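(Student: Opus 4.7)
The strategy is to translate the parallelism condition into the vanishing of all $2\times 2$ minors of the $n\times 2$ matrix whose columns are $Ax+v$ and $Bx+w$, and then exploit that these minors are polynomials in $x$ of degree at most two. Since they vanish on the open set $U$, they must vanish on all of $\R^n$, and we can split each identity by homogeneous degree to extract information about the linear parts and the translation parts separately.

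More precisely, for each pair $i,j\in\{1,\dots,n\}$ the function
\[p_{ij}(x)=(Ax+v)_i(Bx+w)_j-(Ax+v)_j(Bx+w)_i\]
is a polynomial of total degree at most $2$ in $x$ that vanishes on $U$, so it vanishes on all of $\R^n$. Its degree-$2$ homogeneous component reads $(Ax)_i(Bx)_j=(Ax)_j(Bx)_i$ for every $x$, which says $Ax$ and $Bx$ are parallel for every $x\in\R^n$. Because $B\in\GL$, the substitution $y=Bx$ ranges over all of $\R^n$, so every $y\in\R^n$ is an eigenvector of $T:=AB^{-1}$.

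A standard argument now forces $T=a\,\I$ for some scalar $a$: writing $Tv=\lambda_v v$ for each $v$, if $v,w$ are linearly independent then $T(v+w)=\lambda_v v+\lambda_w w$ must also be a scalar multiple of $v+w$, forcing $\lambda_v=\lambda_w$. This is exactly where the hypothesis $n\geq 2$ is used (in $\R^1$ any two vectors are parallel and the lemma is false). Invertibility of $A$ gives $a\neq 0$, and hence $A=aB$.

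Feeding $A=aB$ into the degree-$1$ homogeneous part of $p_{ij}\equiv 0$, and again setting $y=Bx$, yields
\[(aw-v)_j\,y_i=(aw-v)_i\,y_j\qquad\text{for all }y\in\R^n\text{ and all }i,j.\]
Plugging in coordinate vectors $y=e_k$ immediately forces $aw-v=0$, i.e.\ $v=aw$, completing the proof. The only genuine obstacle is the eigenvector-to-scalar step in the preceding paragraph, which is precisely where $n\geq 2$ enters; everything else is bookkeeping with the polynomial identity $p_{ij}\equiv 0$ and the invertibility of $B$.
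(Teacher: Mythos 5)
Your proof is correct, and it takes a genuinely different route from the paper's. The paper introduces the scalar function $a(x)$ defined by $Ax+v = a(x)(Bx+w)$, shows it is smooth near a point, and then differentiates the identity twice to extract the structural equation $A = (Bx_0+w)\otimes\nabla a(x_0) + a(x_0)B$; it then runs a two-case argument (depending on whether $B\nabla a(x_0)$ is parallel to $Bx_0+w$) to conclude $\nabla a(x_0)=0$. Your approach instead encodes parallelism as the vanishing of the $2\times 2$ minors $p_{ij}$, observes these are quadratic polynomials in $x$ vanishing on an open set (hence identically), and splits by homogeneous degree: the degree-$2$ part yields $AB^{-1}=a\I$ via the standard ``every vector is an eigenvector'' lemma, and the degree-$1$ part, after substituting $A=aB$ and $y=Bx$, directly forces $aw-v=0$. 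Your route avoids the case analysis entirely and makes transparent why $n\ge 2$ is needed (in $n=1$ there are no nontrivial $2\times 2$ minors and the everything-is-an-eigenvector step is vacuous), whereas the paper's calculus argument is closer in spirit to the Lagrange-multiplier machinery used elsewhere in the text. One small bookkeeping remark: you never need the degree-$0$ component of $p_{ij}$ (it reads $v_iw_j - v_jw_i = 0$ and is automatic once $v=aw$), and the extraction $u=aw-v=0$ from $y_iu_j=y_ju_i$ for all $y$ is immediate by taking $y=e_i$ with $j\neq i$, which again silently uses $n\ge 2$. Both proofs are valid; yours is shorter and more elementary.
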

\begin{proof}
	Without loss of generality, we may assume that $Ax+v\neq 0 \neq Bx+w$ in $U$, by eventually excluding from $U$ the points $-A^{-1}v$ and $-B^{-1}w$.
	There is a function $a(x)$ such that
	\begin{equation}
		\label{eq_affine_multiples_1}
		Ax+v = a(x)(Bx+w).
	\end{equation}
	Take any $x_0 \in U$ and choose a coordinate $i$ for which $(B x_0 + w)_i \neq 0$.
	The formula
	\[a(x) = \frac{(Ax+v)_i}{(Bx+w)_i}\]
	guarantees that $a(x)$ is a $C^\infty$ function near $x_0$.
	
	Take the directional derivative of equation \eqref{eq_affine_multiples_1} with respect to $x$, in the direction of a vector $x_1$
	\[ A x_1 = \langle \nabla a(x), x_1 \rangle (Bx+w) + a(x) B x_1. \]
	For $x=x_0$, equality for every $x_1$ implies
	\begin{equation}
		\label{eq_affine_multiples_2}
		A = (B x_0 + w) \otimes \nabla a(x_0) + a(x_0) B.
	\end{equation}
	Now take in \eqref{eq_affine_multiples_2}, the directional derivative with respect to $x$, in the direction of $x_2$, at $x=x_0$
	\[
		0 = (x_1^T . Ha(x_0) . x_2) (B x_0 + w) + \langle \nabla a(x_0), x_1 \rangle B x_2 + \langle \nabla a(x_0), x_2 \rangle B x_1.
	\]

	We claim that $\nabla a(x_0) = 0$.
	If $B \nabla a(x_0)$ is parallel to $B x_0 + w$ take $x_1 = \nabla a(x_0)$ and $x_2$ orthogonal to $\nabla a(x_0)$ to obtain
	\[
		0 = (\nabla a(x_0)^T . Ha(x_0) . x_2) (B x_0 + w) + \langle \nabla a(x_0), \nabla a(x_0) \rangle B x_2,
	\]
	which implies $\nabla a(x_0) = 0$ because $B x_2$ is not parallel to $B \nabla a(x_0)$.
	If $B \nabla a(x_0)$ is not parallel to $B x_0 + w$ take $x_1 = x_2 = \nabla a(x_0)$
	\[
		0 = (\nabla a(x_0)^T . Ha(x_0) . \nabla a(x_0)) (B x_0 + w) + \langle \nabla a(x_0), \nabla a(x_0) \rangle B \nabla a(x_0),
	\]
	which implies $B \nabla a(x_0) = 0$ and we conclude that $\nabla a(x_0) = 0$.

	Formula \eqref{eq_affine_multiples_2} becomes $A = a(x_0) B$, and by \eqref{eq_affine_multiples_1} for $x=x_0$ we get
	\[a(x_0) B x_0 + v = a(x_0) (B x_0 + w)\]
	which implies $v = a(x_0) w$.

	Finally $A \in \GL$ implies $a = a(x_0) \neq 0$.
	The proof is complete.
\end{proof}

\section{Main Results}
\label{sec_mainthms}

\begin{proof}[Proof of Theorem \ref{thm_min_existence}]
	First we show that for $r \in (1/2, 1)$, the restriction of $L_r$ to $\mathcal D \times \R^n$ has exactly one global minimum $(A_r, v_r)$.
	By coercivity (Proposition \ref{prop_Lcoercive}) there is at least one minimum.
	Assume the minimum is attained in two different points $(A,v)$ and $(B,w)$.
	Since $L_r$ is convex (Proposition \ref{prop_Lposconvex}) there is equality in equation \eqref{ineq1} for every $\lambda \in [0,1]$, then since $f_r$ is strictly increasing in $[r, \infty)$,
	$Ax+v$ and $Bx+w$ are multiples for every $x \in (2-r)K$ such that $|Ax+v|_2 > r$ or $|Bx+w|_2 > r$.
	Since $((2-r)A K+v) \setminus r \B$ has non-empty interior, Lemma \ref{lem_affine_multiplies} implies
	\[A = a B, v = a w\]
	for some number $a > 0$. If $a=1$ we are done.
	Assume without loss of generality that $a > 1$.
	Since $(A K + v) \setminus r \B$ has non-empty interior (because $K$ is in \Lowner position and $\det(A) = a \det(B) > 1$) we have
	\[L_r(B,w) < L_r(A,v),\]
	which is absurd.

	Now we show that $A_r \in \SL$.
	If $\det(A_r) > 1$ then again $(A_r K + v_r) \setminus r \B$ has non-empty interior and
	\begin{align}
		L_r(\det(A_r)^{-1/n} A_r & , \det(A_r)^{-1/n} v_r)\\
		&= \frac 1{1-r} \int_{\R^n} f_r(|\det(A_r)^{-1/n} (A_r x+v_r)|_2) g_r(\|x\|_K) dx\\
		&< \frac 1{1-r} \int_{\R^n} f_r(|A_r x + v_r|_2) g_r(\|x\|_K) dx\\
		&=L_r(A_r, v_r),
	\end{align}
	which is absurd.
	We conclude then that $A_r \in \partial \mathcal D = \SL \cap \symp$.

	Denote $M_r = \frac{A_r -\I}{1-r}, w_r = \frac{v_r}{1-r}$.
	Since $A_r \in \SL$, we have $I_r(M_r,w_r) = L_r(A_r, v_r)$ and $(M_r, w_r)$ is the unique global minimum of the restriction of $I_r$ to
	\[ \left( \frac{(\SL\cap \sym_+) -\I}{1-r} \right) \times \R^n.\]

	Now let us prove that $(A_r, v_r) \to (\I,0)$.
	By Propositions \ref{prop_Lcoercive} and \ref{prop_LIdbounded}, the sequence $(A_r, v_r)$ is bounded.
	Assume $(A_r, v_r)$ does not converge to $(\I,0)$, then there is a sequence $r_k \to 1^-$ such that $(A_{r_k}, v_{r_k}) \to (A^*, v^*) \in \SL \times \R^n$, with $(A^*, v^*) \neq (\I,0)$.
	Since the \Lowner position is unique up to orthogonal transformations and $A^* \in \symp$, the set $(A^*K+v^*) \setminus \B$ has positive Lebesgue measure.
	By Fatou's lemma,
	\begin{align}
		\liminf_{k \to \infty} L_r(A_{r_k}, v_{r_k}) 
		&\geq \int_{\R^n \setminus \B} \liminf_{k \to \infty} \frac 1{1-r_{k}} f_{r_k}(|x|_2) g_{r_k}(\|x-v_{r_k}\|_{A_{r_k}K}) dx \\
		&\geq \int_{(A^*K+v^*) \setminus \B} \liminf_{k \to \infty} \frac 1{1-r_{k}} f_{r_k}(|x|_2) dx \\
		&= \infty
	\end{align}
	which is absurd because by minimality and Proposition \ref{prop_LIdbounded},
	\[
		L_r(A_{r_k}, v_{r_k}) \leq L_r(\I,0) \leq C.
	\]

	It remains to show that $\tr\left(\frac{M_r}{\|M_r\|_F}\right) \to 0$.
	Recall that the trace is the differential of $\det$ at $\I \in \M$ by Taylor,
	\begin{equation}
		\det(\I + V) = 1 + \tr(V) + o(\|V\|)
	\end{equation}
	where $\frac{o(\varepsilon)}{\varepsilon} \to 0$ as $\varepsilon \to 0$.
	Taking $V = (1-r) M_r$ we get
	\begin{align}
		1
		&= \det(A_r)\\
		&= \det(\I + (1-r) M_r)\\
		&= 1 + (1-r) \tr(M_r) + o((1-r) \|M_r\|_F)
	\end{align}
	that shows the result.
\end{proof}

\begin{proof}[Proof of Theorem \ref{thm_r1extension}]
	Write the function $I_r$ in polar coordinates to obtain
	\begin{align}
		I_r(M,v)
		&= \frac 1{1-r} \int_{\R^n} f_r(|x|_2) g_r(\|(\I+(1-r)M)^{-1}.(x-(1-r)v)\|_K) dx\\
		&= \frac 1{1-r} \int_\S \int_{0}^\infty f_r(s) g_r(\|(\I+(1-r)M)^{-1}.(s\xi-(1-r)v)\|_K) \\ & \times s^{n-1} ds d\xi.\\
	\end{align}
	Since $K$ is smooth, by Taylor expansion and formula \eqref{eq_normgradient} we obtain for any $x,v \in \R^n$,
	\begin{align}
		\label{eq_r1extension_taylor_norm}
		\|x+v\|_K
		&= \|x\|_K + \langle \nabla \|x\|_K, v \rangle + o(|v|_2)\\
		&= \|x\|_K + \left\langle \frac{\|x\|_K}{\langle n^K(x), x \rangle} n^K(x), v \right\rangle + o(|v|_2).
	\end{align}
	We will denote by $o((1-r)^a)$ (resp. $o(1)$) any function of the involved parameters $M,v,r,s,t,\xi$, satisfying
	\[\lim_{r \to 1^-} \frac{o((1-r)^a)}{(1-r)^a} = 0 \left(\text{resp.} \lim_{r \to 1^-} o(1) = 0 \right)\]
	where the limits are uniform in compact sets with respect to the parameters.
	Likewise, $O(1)$ will denote any bounded function.
	For any $\xi,v \in \R^n$, $s \geq r > 0, M \in B_r \subseteq \M$,
	\begin{align}
		\label{eq_r1extension_taylor_matrix}
		(\I + (1-r)M)^{-1}(s\xi - (1-r) v)
		&= (s \xi - (1-r) v) \\ & - (1-r) M (s \xi - (1-r) v) + o(1-r)\\
		&= s \xi - (1-r) (s M \xi + v) + o(1-r).\\
	\end{align}
	Using \eqref{eq_r1extension_taylor_matrix} and \eqref{eq_r1extension_taylor_norm},
	\begin{align}
		\label{eq_r1extension_taylor_composition}
		\left\| (\I + (1-r)M)^{-1} \right. & \left. (s\xi - (1-r) v) \right\|_K\\
		&= \left\| s \xi - (1-r) (s M \xi + v) + o(1-r) \right\|_K \\
		&= s \left\| \xi - (1-r) \left(M \xi + \frac 1s v\right) + o(1-r) \right\|_K\\
		&= s \left( \|\xi\|_K - (1-r) \left\langle \frac{\|\xi\|_K}{\langle n^K(\xi), \xi \rangle} n^K(\xi), M \xi + \frac 1s v \right\rangle \right) \\ & + o(1-r).
	\end{align}

	Putting all together and making the substitution $s = 1 + (1-r) t$ we get
	\begin{align}
		I_r(M,v)
		&= \frac 1{1-r} \int_\S \int_0^\infty f_r(s) \\
		&\times g_r\left( s \left( \|\xi\|_K - (1-r) \left\langle \frac{\|\xi\|_K}{\langle n^K(\xi), \xi \rangle} n^K(\xi), M \xi + \frac 1s v \right\rangle \right) + o(1-r) \right) s^{n-1} ds d\xi\\
		&= \int_\S \int_{-\frac 1{1-r}}^\infty f(t) \\
			&\times g\left( \frac{\|\xi\|_K - 1}{1-r} - \left\langle \frac{\|\xi\|_K}{\langle n^K(\xi), \xi \rangle} n^K(\xi), M \xi + v + o(1) \right\rangle + t (\|\xi\|_K + o(1)) + o(1) \right)\\
			&\times (1+(1-r)t)^{n-1} dt d\xi.\\
	\end{align}

	Notice that $\|\xi\|_K=1$ for $\xi \in \St$, $\|\xi\|_K>1$ for $\xi \in \S \setminus \partial K$ and that $n^K(\xi) = \xi$ for every $\xi \in \St$.
	We have $\lim_{r \to 1^-} \frac{\|\xi\|_K - 1}{1-r} \to \infty$ in $\S \setminus \partial K$ and since $0$ is in the interior of $K$, $\langle n^K(\xi), \xi \rangle$ and $\|\xi\|_K$ are bounded from below.
	Also, by \ref{f_zeroleft} the integrand is $0$ for $t<-1$, then
	\begin{align}
		I_r(M,v)
		&= \int_\St \int_{-1}^\infty f(t) g\left( - \left\langle \xi, M \xi + v + o(1) \right\rangle + t (1 + o(1)) + o(1) \right)\\
			&\times (1+(1-r)t)^{n-1} dt d\xi\\
		&+ \int_{\S \setminus \partial K} \int_{-1}^\infty f(t) g\left( \frac{\|\xi\|_K - 1}{1-r} + O(1) + t (\|\xi\|_K + o(1)) + o(1) \right)\\
			&\times (1+(1-r)t)^{n-1} dt d\xi.\\
	\end{align}

	To prove the uniform convergence in compact sets, consider a convergent sequence $(M_k, v_k) \to (M,v)$ and $r_k \to 1^-$.
	By property \ref{g_0}, the function $g$ in the second integral is zero for $t \geq C$ where $C$ is independent of $k$.
	The functions $f,g$ are thus uniformly bounded in the support of both integrals, and we may apply the Dominated Convergence Theorem to obtain (thanks to property \ref{g_smooth})
	\[\lim_{k \to \infty} I_{r_k}(M_k,v_k) = \int_\St f(t) g\left( -\left\langle \xi, M \xi + v \right\rangle + t \right) d\xi.\]

	Finally we show that $F$ satisfies the assumptions of Theorem \ref{thm_isotropic_general}.
	Only the convexity is non-trivial.
	By \ref{f_smooth},\ref{g_smooth}, $f$ and $g$ are absolutely continuous and differentiable a.e., $F$ is twice differentiable a.e. and
	\[F''(x) = \int_{-1}^1 \bar g'(t)f'(x-t) dt \geq 0,\]
	showing that $F$ is convex.
	To see the strict convexity in $[0,\infty)$ take any $x > 0$.
	If $F''(x) = 0$ the last inequality implies that $f'=0$ in a set of positive measure inside $(x-1,x+1)$ (namely, the set where $g' < 0$) and this contradicts \ref{f_inc}.
\end{proof}

\begin{proof}[Proof of Theorem \ref{thm_min_equivalences}]
\ \\
			\underline{$1 \Rightarrow 2$:}\\
			Assume by contradiction that for some $(M, w) \in (\sym_0 \times \R^n) \setminus (0,0)$, $\langle \xi, M \xi + w \rangle \leq 0$ for $\nu$-almost every $\xi$.
			For $\lambda > 1$, since $F$ is non-decreasing,
			\begin{align}
				I_\nu(\lambda(M,w)) 
				&= \int_\St F(\lambda \langle\xi, M \xi + w \rangle) d\nu \\
				&\leq \int_\St F(\langle\xi, M \xi + w \rangle) d\nu \\
				&= I_\nu(M,w).
			\end{align}
			If $(M_0, w_0)$ is the global minimum, by convexity of $I_\nu$
			\[I_\nu\left(\left(1-\frac 1\lambda \right) (M_0, w_0) + \frac 1\lambda \lambda(M,w) \right) \leq \left(1-\frac 1\lambda \right) I_\nu(M_0, w_0) + \frac 1\lambda I_\nu(\lambda(M,w)).\]
			Taking limits when $\lambda \to +\infty$ we obtain
			\[I_\nu\left( (M_0, w_0) + (M,w) \right) \leq I_\nu(M_0, w_0)\]
			which is absurd.

			\underline{$2 \Rightarrow 1$:}\\
			Consider
			\[E_{(M,w)} = \left\{\xi \in \St / \langle \xi, M \xi + w \rangle > 0\right\}.\]
			We claim that $\varepsilon := \inf \nu(E_{(M,w)}) > 0$, where the infimum is taken over all \\ ${(M, w) \in \sym_0 \times \R^n}$ with $\|(M,w)\|=1$.

			Assume by contradiction that there is a convergent sequence $(M_k, w_k) \to (M_0, w_0)$ for which $\nu(E_{(M_k, w_k)}) \to 0$.
			If $\xi \in E_{(M_0, w_0)}$, then $\langle \xi, M_0 \xi + w_0 \rangle > 0$ and there is $k_0$ for which $k > k_0$ implies $\xi \in E_{(M_k, w_k)}$.
			This means that $E_{(M_0, w_0)} \subseteq \liminf\{E_{(M_k, w_k)}\}$ and we have
			\[\nu(E_{(M_0, w_0)}) \leq \nu(\liminf\{E_{(M_k, w_k)}\}) \leq \liminf \nu(E_{(M_k, w_k)}) = 0\]
			which is absurd since $\|(M_0, w_0)\| = 1$.

			Using the comparison $F(x) \geq F'(0) x_+$, we deduce that 
			\begin{align}
				I_\nu(M,w)
				&\geq \int_\S F'(0) \langle \xi, M \xi + w \rangle_+ d\nu\\
				&\geq \|(M, w)\| F'(0) \varepsilon \nu(\S) \\
			\end{align}
			which implies the coercivity of $I_\nu$.
			
			Now take $(M_0, w_0), (M_1, w_1)$ and $(M_\lambda, w_\lambda) = \lambda (M_0, w_0) + (1-\lambda) (M_1, w_1)$ with $\lambda \in [0,1]$.
			Denote momentarily $\xi^{(\lambda)} = \langle \xi, M_\lambda \xi + w_\lambda \rangle$, for short.
			Since $F$ is convex in $\R$ and strictly convex in $[0,\infty)$ we have
			\begin{align}
				\int_{\xi^{(\lambda)} > 0} F\left(\xi^{(\lambda)} \right) d\nu
				&< \int_{\xi^{(\lambda)} > 0} \lambda F\left(\xi^{(0)} \right) + (1-\lambda) F\left( \xi^{(1)} \right) d\nu\\
				\int_{\xi^{(\lambda)} \leq 0} F\left(\xi^{(\lambda)} \right) d\nu
				&\leq \int_{\xi^{(\lambda)} \leq 0} \lambda F\left(\xi^{(0)} \right) + (1-\lambda) F\left( \xi^{(1)} \right) d\nu.\\
			\end{align}

			We obtain
			\[
				I_\nu(M_\lambda, w_\lambda) < \lambda I_\nu(M_0, w_0) + (1-\lambda) I_\nu(M_1, w_1),
			\]
			thus $I_\nu$ is strictly convex and coercive, and it has a unique global minimum.

			\underline{$3 \Rightarrow 2$, case $\nu = \nu_K$:}\\
			Let $(M,w) \in \sym_0 \times \R^n$. By Theorem \ref{thm_lowner_equivalences} there exists $\xi_0 \in \St$ such that $\langle \xi_0, M \xi_0 + w \rangle > 0$.
	Now we may find $\varepsilon > 0$ such that $|\xi - \xi_0|_2 < \varepsilon$ implies $\langle \xi, M \xi + w \rangle > 0$ as well.
	Since $\xi_0$ is in the support of $\nu_K$, $\nu_K(B(\xi_0, \varepsilon)) > 0$ (Proposition \ref{thm_Csupport}).
			This implies that
			\[\nu_K \left(\{ \xi \in \S / \langle \xi, M \xi + w \rangle > 0 \}\right) > 0.\]

			\underline{$2 \Rightarrow 3$, case $\nu = \nu_K$:}\\
			Is trivial since $\nu_K(\{\xi \in \St / \langle \xi, M \xi + w \rangle > 0\})>0$ implies $(\{\xi \in \St / \langle \xi, M \xi + w \rangle > 0\})$ is non-empty.

		\underline{$2 \Leftrightarrow 3$, case $\nu = c$:}\\
			Since $\St$ is finite, $\co(\St)$ is a polytope and $\nu_K$ is an atomic measure supported in $\St$ (Proposition 11 again). The equivalence then follows from the case $\nu = \nu_K$.
\end{proof}

\begin{proof}[Proof of Theorem \ref{thm_isotropic_general}]
	We consider the inner product in $\M \times \R^n$ mentioned in the introduction.
	It is trivial to verify that
	that $\sym_0 \times \R^n$ is the orthogonal complement of $(\I,0)$ in $\sym \times \R^n$.
	We compute the derivative of $I_\nu$ in the direction of $(\bar M, \bar v)$, at the point $(M,v)$:
	\begin{align}
		\langle \nabla I_\nu(M,v), (\bar M, \bar v) \rangle
		&= \int_\S F'(\langle \xi, M \xi + v \rangle) \langle \xi, \bar M \xi + \bar v \rangle d\nu\\
		&= \int_\S F'(\langle \xi, M \xi + v \rangle) (\langle \xi \otimes \xi, \bar M \rangle + \langle \xi, \bar v \rangle ) d\nu\\
		&= \left\langle \int_\S F'(\langle \xi, M \xi + v \rangle) \left(\xi \otimes \xi, \xi \right) d\nu, (\bar M, \bar v) \right\rangle\\
	\end{align}
	we deduce that
	\[
		\nabla I_\nu(M,v)
		= \int_\S F'(\langle \xi, M \xi + v \rangle) \left(\xi \otimes \xi, \xi \right) d\nu
	\]
	since this expression already in $\sym \times \R^n$.

	The gradient of the function $T(M,v) = \tr(M)$ is $\nabla T(M,v) = (\I, 0)$.
	Then the equation of Lagrange multipliers at a minimum $(M_0, w_0)$ is written as
	\[ \int_\S F'(\langle \xi, M_0 \xi + w_0 \rangle) \left(\xi \otimes \xi, \xi \right) d\nu = \lambda (\I,0).\]
	This clearly implies that
	\begin{equation}
		\label{eq_isotropic_general_1}
		\int_\S F'(\langle \xi, M_0 \xi + w_0 \rangle) \left(\xi \otimes \xi\right) d\nu = \lambda\ \I
	\end{equation}
	\begin{equation}
		\int_\S F'(\langle \xi, M_0 \xi + w_0 \rangle) \xi d\nu = 0.
	\end{equation}
	Since $F$ is non-decreasing, $F'(\langle \xi, M_0 \xi + w_0 \rangle) \geq 0$.
	Taking traces in equation \eqref{eq_isotropic_general_1} we get
	\[\lambda = \frac 1n \int_\S F'(\langle \xi, M_0 \xi + w_0 \rangle) d\nu. \]
	By Theorem \ref{thm_min_equivalences}, we know that $\langle \xi, M_0 \xi + w_0 \rangle > 0$ for a set of positive $\nu$-measure.
	Since $F'(x) \geq 0$ for every $x$ and $F'(x) > 0$ for $x \geq 0$, we deduce that $\lambda > 0$ and the proof is complete.
\end{proof}

\begin{proof}[Proof of Theorem \ref{thm_ADerivative}]
	First we prove that if $I_1$ has a unique global minimum $(M_0, w_0)$ then $(M_r, w_r)$ converges to $(M_0, w_0)$.
	By coercivity of $I_1$ there exists $R > 0$ such that $(M,v) \in \sym_0 \times \R^n, \left\| (M,v) \right\| \geq R$ implies
	\[I_1(M,w) \geq C+2\]
	where $C$ is given in Proposition \ref{prop_LIdbounded}.

	Let $B_R=\left\{(M,w) \in \sym \times \R^n/ \left\| (M,w)\right\| \leq R\right\}$. 
	Since $I_1$ is continuous in the compact set $B_R$, there is $\varepsilon > 0$ such that 
	\[I_1(M,w) \geq C + 1\]
	for every $(M,w) \in \partial B_R$ with $|\tr(M)| < \varepsilon$.
	By Theorem \ref{thm_r1extension}, there is $r_0 \in (1/2,1)$ such that for every $r \in (r_0, 1)$ and $(M,w) \in B_R$,
	\[|I_r(M,w) - I_1(M,w)| \leq 1/2.\]
	Increasing $r_0$ if necessary, we may assume that for every $r \in (r_0, 1)$ and $\lambda \in [0,1]$,
	\[\det(\lambda (A_r, v_r) + (1-\lambda) (\I,0)) \leq \frac {C+1/2}{C+1/4} = 1 + \frac{1}{4C+1}\]
	and that $\left|\tr\left( \frac{M_r}{\|M_r\|_F}\right)\right| \leq \frac{\varepsilon}R$ (last part of Theorem \ref{thm_min_existence}).

	First we claim that $(M_r,w_r) \in B_R$ for $r \in (r_0,1)$.
	Assume by contradiction that $(M_r,w_r) \not\in B_R$ for some $r \in (r_0,1)$, and consider $\lambda < 1$ such that $\|\lambda (M_r,w_r) \| = R$, then since $\left| \tr(\lambda M_r)\right| \leq \frac{\|\lambda M_r\|_F}{R} \varepsilon \leq \varepsilon$,
	\[I_r(\lambda (M_r,w_r)) \geq I_1(\lambda (M_r,w_r)) - \frac 12 \geq C + 1 - 1/2.\]
	By the convexity of $\mathcal D$,
	\[(\I,0) + (1-r) \lambda (M_r, w_r) = \lambda (A_r,v_r) + (1-\lambda) (\I,0) \in \mathcal D\]
	\begin{align}
		L_r(\lambda (A_r,v_r) + (1-\lambda) (\I, 0)) 
		&= \det(\lambda (A_r, v_r) + (1-\lambda) (\I,0))^{-1} I_r(\lambda (M_r,w_r)) \\
		&\geq \left(\frac {C+1/2}{C+1/4}\right)^{-1} ( C + 1/2) = C+1/4\\
		&\geq L_r(\I,0) + 1/4\\
		&\geq L_r(A_r,v_r) + 1/4.
	\end{align}
	We obtain the inequalities
	\[L_r(\I,0) < L_r(\lambda (A_r,v_r) + (1-\lambda) (\I,0)) > L_r(A_r,v_r)\]
	contradicting the convexity of $L_r$, and the claim is proved.

	For any matrix $M \in \sym_0$ consider 
	\[M^{(r)} = \frac{\det(\I + (1-r)M)^{-1/n}(\I + (1-r) M) - \I}{1-r}\]
	and notice that it belongs to $\frac{(\SL\cap \sym_+) -\I}{1-r}$ for $r$ close to $1$.
	We also have
	\begin{align}
		\lim_{r \to 1^-} M^{(r)}
		&= \lim_{r \to 1^-}\frac{\det(\I + (1-r)M)^{-1/n} - 1}{1-r} \I + \det(\I + (1-r)M)^{-1/n} M\\
		&= \left. \frac{\partial}{\partial t}\right|_{t=0}\left(\det(\I + t M)^{-1/n} \I \right) + M\\
		&= -\frac 1n \tr(-M) \I + M\\
		&=M.
	\end{align}

	Now that $(M_r,w_r)$ is bounded, for every convergent sequence $(M_{r_k},w_{r_k}) \to (M_0, w_0)$ with $r_k \to 1^-$, and for every $(M,w) \in \sym_0 \times \R^n$,
	\[I_1(M_0, w_0) \leftarrow I_{r_k}(M_{r_k}, w_{r_k}) \leq I_{r_k}(M^{(r_k)},w) \to I_1(M,w)\]
	so that $(M_0,w_0)$ is the (unique) minimum of $I_1$, and we deduce $(M_r, w_r) \to (M_0, w_0)$ as desired.

	Finally, we write
	\begin{align}
		\left. \frac{\partial (A_r, v_r)}{\partial r} \right|_{r=1}
		&= \lim_{r\to 1^-} \frac{(A_r, v_r) - (\I,0)}{r-1}\\
		&= \lim_{r\to 1^-} (-M_r,-w_r)\\
		&= -(M_0,w_0).
	\end{align}

	For the second part of the theorem take $\delta$ any continuous function with compact support and write, as in the proof of Theorem \ref{thm_r1extension},
	\begin{align}
		\frac 1{1-r} \int_{\R^n} \delta(x) \frac{(f')_r(|x|_2)}{|x|_2} & g_r(\|\bar A_r^{-1}(x-\bar v_r)\|_K) dx\\
		&= \int_\St \int_{-1}^\infty \delta((1+(1-r)t) \xi ) f'(t) \\ &\times g\left( - \left\langle \xi, M_0 \xi + w_0 + o(1) \right\rangle + t (1 + o(1)) + o(1) \right)\\
			&\times (1+(1-r)t)^{n-2} dt d\xi\\
		&+ \int_{\S \setminus \partial K} \int_{-1}^\infty \delta((1+(1-r)t) \xi) f'(t) \\ &\times g\left( \frac{\|\xi\|_K - 1}{1-r} + O(1) + t (\|\xi\|_K + o(1)) + o(1) \right)\\
			&\times (1+(1-r)t)^{n-2} dt d\xi\\
	\end{align}
	hence by the Dominated Convergence Theorem,
	\[ \frac 1{1-r} \int_{\R^n} \delta(x) \frac{(f')_r(|x|_2)}{|x|_2} g_r(\|A_r^{-1}(x-v_r)\|_K) dx \to \int_\St \delta(\xi) F'(\langle \xi, M_0 \xi + v_0 \rangle d\xi. \]
\end{proof}

\bibliographystyle{abbrv}
\bibliography{ref}

\end{document}